\newtheorem{theorem}{Theorem}[section]
\newtheorem{corollary}[theorem]{Corollary}
\newenvironment{proof}[1][Proof]{\begin{trivlist}
\item[\hskip \labelsep {\bfseries #1}]}{\end{trivlist}}
\newcommand{\qed}{\nobreak \ifvmode \relax \else
      \ifdim\lastskip<1.5em \hskip-\lastskip
      \hskip1.5em plus0em minus0.5em \fi \nobreak
      \vrule height0.75em width0.5em depth0.25em\fi}
\title{Positivity preserving DG schemes for a Boltzmann - Poisson model of electrons in semiconductors in curvilinear momentum coordinates}
\author{Jos\'e A. Morales E.$^1$, Irene M. Gamba$^2$, Eirik Endeve$^3$, Cory Hauck$^3$}
\date{$^1$ TU Wien - Institute for Analysis and Scientific Computing\\
$^2$The University of Texas at Austin - Institute for Computational Engineering and Sciences \& Department of Mathematics\\
$^3$Oak Ridge National Lab - Computational and Applied Mathematics Group
}
\begin{document}

\maketitle

\begin{abstract}

The work presented in this paper is related to the
development of positivity preserving Discontinuous Galerkin (DG) methods
for Boltzmann - Poisson (BP) computational models of electronic transport in semiconductors. 
We pose the Boltzmann Equation for electron transport in
curvilinear coordinates for the momentum. 
We consider the 1D diode problem with azimuthal symmetry, which is a 3D plus time problem. 
We choose for this problem the spherical coordinate system $\vec{p}(|\vec{p}|,\mu=cos\theta,\varphi)$, slightly different to the choice in previous DG solvers for BP, because its DG formulation gives simpler integrals 
involving just piecewise polynomial functions for both transport and collision terms. Applying the strategy of Zhang \& Shu, \cite{ZhangShu1}, \cite{ZhangShu2}, Cheng, Gamba, Proft, \cite{CGP}, and Endeve et al. \cite{EECHXM-JCP}, we treat the collision operator as a source term, and find convex combinations of the transport and collision terms which guarantee the positivity of the cell average of our numerical probability density function at the next time step. The positivity of the numerical solution to the pdf in the whole domain is guaranteed by applying the limiters in \cite{ZhangShu1}, \cite{ZhangShu2} that preserve the cell average but modify the slope of the piecewise linear solutions in order to make the function non - negative. In addition of the proofs of positivity preservation in the DG scheme, we prove the stability of the semi-discrete DG scheme under an entropy norm, using the dissipative properties of our collisional operator given by 
its entropy inequalities. The entropy inequality we use depends on an exponential of the Hamiltonian rather than the Maxwellian associated just to the kinetic energy.

\end{abstract}

\section{Introduction: Boltzmann Equation with Momentum in Curvilinear Coordinates}

We can write the Boltzmann - Poisson model for electron transport in semiconductors for a more general set of collision operators as the system in the $(\vec{x},\vec{p})$ position-momentum phase space for electrons
\begin{equation}
\partial_t f +  \partial_{\vec{x}} f \cdot \partial_{\vec{p}} \varepsilon
+  \partial_{\vec{p}} f \cdot q \partial_x V
= Q(f) 
= \int_{\Omega_{\vec{p}}} S(\vec{p}\,' \rightarrow \vec{p} ) f' d\vec{p}\,'
- f \int_{\Omega_{\vec{p}}} S(\vec{p} \rightarrow \vec{p}\,' ) d\vec{p}\,'
\end{equation}
\begin{equation}
-\partial_{\vec{x}} \cdot (\epsilon \partial_{\vec{x}} V)(\vec{x},t) =
q \left[ N(\vec{x}) - \int_{\Omega_{\vec{p}}} f(\vec{x},\vec{p},t) d\vec{p} \right], \quad \vec{E}(\vec{x},t) = -\partial_{\vec{x}}V(\vec{x},t)
\end{equation}
The momentum variable is $\vec{p}=\hbar\vec{k}$,
$\vec{k}$ is the crystal momentum wave vector,
$\varepsilon(\vec{p})$ is the conduction energy band for electrons in the semiconductor,
$f(\vec{x},\vec{p},t)$ is the probability density function (pdf) in the phase space for electrons in the conduction band,
$\vec{v}(\vec{p}) = \partial_{\vec{p}} \varepsilon (\vec{p}) $
is the quantum mechanical electron group velocity, $q$ is the positive electric charge,   
 $V(\vec{x},t)$ is the electric potential (we assume that the only force over the electrons is the self-consistent electric field, and that it is given by the negative gradient of the electric potential), $\epsilon$ is the permittivity for the material,  $N(\vec{x})$ is the fixed doping background in the semiconductor material, and $S(\vec{p}\,' \rightarrow \vec{p} )$ is the scattering kernel that defines the gain and loss operators, whose difference give the collision integral $Q(f)$.\\

For many collision mechanisms in semiconductors,
the scattering kernel $S(\vec{p}\,' \rightarrow \vec{p} )$ 
depends on the difference
$\varepsilon(\vec{p}) - \varepsilon(\vec{p}')$, as
in collision operators of the form  
$\delta( \varepsilon(\vec{p}) - \varepsilon(\vec{p}') + l\hbar w_p)$
for electron - phonon collisions. This form is related to energy conservation equations such as Planck's law, in which the jump in energy from one state to another is balanced with the energy of a phonon. The mathematical consequence of this is that 
we can obtain much simpler expressions for the integration of the collision operator if we express the momentum in curvilinear coordinates that involve the energy $\varepsilon(\vec{p})$ as one of the variables \cite{MP}, \cite{carr03},  \cite{cgms06}, \cite{CGMS-CMAME2008} . The other two momentum coordinates could be either an orthogonal system in the level set of energies, orthogonal to the energy gradient itself, or angular coordinates which are known to be orthogonal to the energy in the limit of low energies close to a local conduction band minimum, such as $(\mu, \varphi)$.

This gives both physical and mathematical motivations to pose the
Boltzmann Equation for semiconductors in curvilinear coordinates for the momentum $\vec{k}(k_1,k_2,k_3)$, to later on choose
the particular case of curvilinear coordinates such as $(\varepsilon,\mu,\varphi)$. 
We will assume in the rest of this chapter that our
system of curvilinear coordinates for the momentum 
 is orthogonal, as in the case $(\varepsilon,\mu,\varphi)$ in
which $\varepsilon(|\vec{p}|)$ is a monotone increasing function,
so this set of coordinates is equivalent to the representation in spherical coordinates for the momentum.  

The Boltzmann Equation for semiconductors (or more general forms of linear collisional plasma models) written in orthogonal curvilinear coordinates $\vec{p}(p_1,p_2,p_3)$ for the momentum $\vec{p} = (p_x,p_y,p_z)$
is

\begin{equation}
\partial_t Jf +  \partial_{\vec{x}}   \cdot \left( J f  
\vec{v} \right)
+ q  \left[
\partial_{p_1}\left(\frac{ Jf \partial_{\vec{x}} V\cdot\hat{e}_{p_1}}{h_1} \right)  +
\partial_{p_2}\left( \frac{Jf \partial_{\vec{x}} V\cdot\hat{e}_{p_2}}{h_2} \right)  +
\partial_{p_3}\left( \frac{ Jf \partial_{\vec{x}} V\cdot\hat{e}_{p_3}}{h_3} \right)  
\right]
= C(f) ,
\end{equation}
\begin{equation}
C(f) = J Q(f)
= J \int_{\Omega_{\vec{p}}} S(\vec{p}\,' \rightarrow \vec{p} )     
J' f' \, dp_1'\,dp_2'\,dp_3'
- \, Jf \int_{\Omega_{\vec{p}}} S(\vec{p} \rightarrow \vec{p}\,' ) 
J' \, dp_1'\,dp_2'\,dp_3'
\end{equation}

with 
$h_j = \left| \frac{\partial \vec{p}}{ \partial p_j} \right|,
\, j=1,2,3$, $h_1 h_2 h_3 = J = \frac{\partial \vec{p}}{\partial(p_1,p_2,p_3)}$ the jacobian of the transformation,
$J' = \frac{\partial \vec{p}\,'}{\partial(p_1',p_2',p_3')}$,
and $\hat{e}_j$ the unitary vectors associated to each 
curvilinear coordinate $p_j$ at the point $(p_1,p_2,p_3)$.

We notice that we have expressed the Boltzmann Eq. in divergence form with respect to the momentum curvilinear coordinates.
We can write it even more compactly in the form
\begin{equation}
\partial_t (Jf) +  \partial_{\vec{x}}   \cdot \left( J f  
\vec{v}(\vec{p}\,) \right)
+  \sum_{j=1}^3 
\partial_{p_j}\left( Jf
\frac{  q\partial_{\vec{x}} V(\vec{x},t) \cdot\hat{e}_{p_j}}{h_j} \right)  
= C(f) ,
\end{equation}

If $J \geq 0$, we can interpret $Jf(\vec{x},p_1,p_2,p_3,t)$ as a probability density function in the phase space $(\vec{x},p_1,p_2,p_3)$

This Boltzmann Eq. is a more general form for orthogonal curvilinear  coordinates, from which our previous spherical coordinate systems from Chapter 3 and Chapter 4 can be derived. For the one in Chapter 3, the orthogonal curvilinear system is $(r,\mu, \varphi)$, with $r \propto k^2.$ The one in Chapter 4 is $(w,\mu, \varphi)$, with
$w \propto \varepsilon$. assuming a Kane band energy $\varepsilon$.



\section{1Dx-2Dp Diode Symmetric Problem}

As we have mentioned, for the case of a 1D silicon diode, the main collision mechanisms are electron-phonon scatterings
\begin{equation}
S(\vec{p}\,' \rightarrow \vec{p}) = \sum_{j=-1}^{+1} c_j \delta(\varepsilon(\vec{p}\,') - \varepsilon(\vec{p}) + j\hbar \omega), \quad c_1 = (n_{ph} + 1) K, \, c_{-1} = n_{ph} K,
\end{equation}
with $\omega$ the phonon frequency, assumed constant, and $n_{ph}=n_{ph}(\omega)$ the phonon density. $K, \, c_0$ are constants. 

If we assume that the energy band just depends on the momentum norm, $\varepsilon(p), \quad p = |\vec{p}|$, and that the initial condition for the pdf has azimuthal symmetry, $f|_{t=0} = f_0(x,p, \mu, t), \, \partial_{\varphi} f = 0$,   $\quad \vec{p} = p (\mu, \sqrt{1-\mu^2} \cos\varphi, \sqrt{1-\mu^2} \sin \varphi) $, then the dimensionality of the problem is reduced to 3D+time, 1D in $x$, 2D in $(p,\mu)$, and the BP system for $f(x,p,\mu,t), \, V(x,t)$ is written in spherical coordinates $\vec{p}(p,\mu,\varphi)$ for the momentum as

\begin{equation}
\partial_t f + \partial_x (f \partial_p \varepsilon \mu) + \left[ \frac{\partial_p (p^2 f \mu) }{p^2} + \frac{\partial_{\mu} (f(1-\mu^2)) }{p} \right] q\partial_x V(x,t) = Q(f) \, ,
\end{equation}

\begin{equation}
- \partial_x^2 V = \frac{q}{\epsilon} \left[ N(x) - 
2\pi \int_{-1}^{+1} \int_{0}^{p_{max}} f p^2 dp d\mu, \right], \,
\quad V(0) = 0, \, V(L) = V_0 .
\end{equation}

We have assumed that the permittivity $\epsilon$ is constant. 
The Poisson BVP above can be easily solved and an analytic integral solution is easily obtained for $V(x,t)$ and $E(x,t) = -\partial_x V(x,t)$, which later can be projected in the adequate space for the numerical method. For this problem we only need to concern about the Boltzmann Equation, since given the electron density we know the solution for the potential and electric field. 

The collision operator, in this case, has the form

\begin{eqnarray}
&& 
Q(f) =
2\pi \left[ 
\sum_{j=-1}^{+1} c_j  \int_{-1}^{+1} d\mu' \left. 
f(x,p(\varepsilon'),\mu') \, p^2(\varepsilon') \frac{dp'}{d\varepsilon'} \right|_{\varepsilon' = \varepsilon(p) + j\hbar\omega } \chi(\varepsilon(p) + j\hbar\omega) 
\right.
\nonumber\\
&& - \left.
f(x,p,\mu,t)  \sum_{j=-1}^{+1} c_j \, 2 \left. p^2(\varepsilon') \frac{dp'}{d\varepsilon'} \right|_{\varepsilon' = \varepsilon(p) - j\hbar\omega } \chi(\varepsilon(p) - j\hbar\omega)
\right]
\nonumber
\end{eqnarray}

where $\chi(\varepsilon)$ is 1 if $\varepsilon \in [0,\varepsilon_{max}]$ and 0 if $\varepsilon \notin [0,\varepsilon_{max}]$, with $\varepsilon_{max} = \varepsilon(p_{max})$. 

The domain of the BP problem is $x\in [0,L], \, p \in [0, p_{max}], \, \mu \in [-1,+1]$, $t>0$.

Moreover, since $\varepsilon(p)$, 
then $\partial_{\vec{p}} \varepsilon = \frac{d\varepsilon}{dp} \hat{p} $.
We assume that $\frac{d\varepsilon}{dp} > 0$ is well behaved enough such that $p(\varepsilon)$ is a monotonic function for which
$\frac{dp}{d\varepsilon} = ( \frac{d\varepsilon}{dp} )^{-1} $ exists. 

The collision frequency is

\begin{equation}
\nu(\varepsilon(p)) = 
\sum_{j=-1}^{+1} c_j \, 4 \pi \, 
\chi(\varepsilon(p) - j\hbar\omega) \,
\left. p^2(\varepsilon') \frac{dp'}{d\varepsilon'} \right|_{\varepsilon' = \varepsilon(p) - j\hbar\omega } 
=
\sum_{j=-1}^{+1} c_j n(\varepsilon(p) - j \hbar \omega)
\end{equation}

where 

\begin{equation}
n(\varepsilon(p) - j \hbar \omega) = \int_{\Omega_{\vec{p}}} \delta(\varepsilon(\vec{p}\,') - \varepsilon(\vec{p}) + j \hbar \omega) \, d\vec{p}\, '  
\end{equation}

is the density of states with energy $\varepsilon(p) - j \hbar \omega$.


\section{DG for Boltzmann-Poisson 1Dx-2Dp Problem}

\subsection{Weak Form of the Transformed Boltzmann Eq.}

Since for $f(x,p,\mu), \, g(x,p,\mu) $ we have that

\begin{equation}
\int_{\Omega_x} \int_{\Omega_{\vec{p}}} f g \, d\vec{p} dx = 
2\pi \, \int_{\Omega_x} \int_{\Omega_{(p,\mu)}} f g \, p^2 \, dp d\mu dx
\end{equation}

we define the inner product of two functions $f$ and $g$
in the $(x,p,\mu)$ space as
\begin{equation}
(f,g)_{X \times K} = \int_X \int_K f g \, p^2 \, dp d\mu dx
\end{equation}
where $X \subset [0,L] $ and $K \subset [0,p_{max}]\times[-1,+1]$.

The Boltzmann Equation for our problem can be written in weak form as 
\begin{equation}
( \partial_t f,g)_{\Omega} + \left( \partial_x (f \partial_p \varepsilon \mu),\, g \right)_{\Omega} + \left( \left[ \frac{\partial_p (p^2 f \mu) }{p^2} + \frac{\partial_{\mu} (f(1-\mu^2)) }{p} \right] q\partial_x V(x,t) , g\right)_{\Omega} = \left( Q(f), g \right)_{\Omega} \, ,
\end{equation}

where $\Omega = X \times K $. 
More specifically, we have that ($\partial_t g = 0 $)

\begin{eqnarray}
&& \partial_t
\int_{\Omega} f\,g \, p^2 \, dp d\mu dx \, +
\int_{\Omega} \partial_x (f \partial_p \varepsilon \mu) \, g \, p^2 dp d\mu dx  
\nonumber\\
&+& 
\int_{\Omega}
{\partial_p (p^2 f \mu) } q\partial_x V(x,t) g \, dp d\mu dx + 
\int_{\Omega}
{\partial_{\mu} (f(1-\mu^2)) }  q\partial_x V(x,t) g \, p \, dp d\mu dx   \nonumber\\
& = & 
\int_{\Omega}  Q(f) \, g \, p^2 dp d\mu dx
\nonumber
\end{eqnarray}

\subsection{DG-FEM Formulation for the Transformed Boltzmann Eq. in the $(x,p,\mu)$ domain}

We will use the following mesh in the domain
\begin{equation}
\Omega_{ikm} =
X_i \times K_{k,m} = 
[x_{i^-}, x_{i^+}] \times [p_{k^-}, p_{k^+}] \times 
[\mu_{m^-}, \mu_{m^+}] 
\end{equation}
where 
\begin{equation}
x_{i^\pm} = x_{i\pm 1/2}, \quad
p_{k^\pm} = p_{k\pm 1/2}, \quad 
\mu_{m^\pm} = \mu_{m\pm 1/2} \, .
\end{equation}
We define the following notation for the internal product in our problem using the above mentioned mesh:
\begin{equation}
\int_{ikm} f g \, p^2 dp d\mu \, dx = 
(f,g)_{\Omega_{ikm}} 
\end{equation}


The semi-discrete DG Formulation for our Transformed Boltzmann Equation in curvilinear coordinates is: \\

Find $f_h \, \in \, V_h^k$ such that $\forall \, g_h \, \in V_h^k$ 
and $\, \forall \, \Omega_{ikm} $

\begin{eqnarray}
\partial_t  \int_{ikm}  f_h \, g_h \, p^2 dp d\mu dx 
&&
\nonumber\\
-\int_{ikm}  \partial_p \varepsilon (p) \, f_h \, \mu \, \partial_x g_h \, p^2 dp d\mu dx \,
&\pm &
 \int_{km} \partial_p \varepsilon \, \widehat{ f_h  \mu }|_{x_{i\pm}}  \, g_h|_{x_{i\pm}}^{\mp} \, p^2 dp d\mu  
\nonumber\\
- \int_{ikm}
{ p^2 } (-qE)(x,t) f_h \mu \, \partial_p g_h \, d\mu dx
& \pm & 
\int_{im} p^2_{k^\pm} \,
(-q  \widehat{ E f_h \mu } )|_{p_{k\pm}} g_h|_{p_{k\pm}}^{\mp} \, d\mu dx
\nonumber\\
- \int_{ikm}
{ (1-\mu^2) f_h }  (-qE)(x,t) \, \partial_{\mu} g_h \, p \, dp d\mu dx 
&\pm &
\int_{ik} (1-\mu_{m\pm}^2)
(-q \widehat{ E f_h })|_{\mu_{m\pm}}   \, g_h|_{\mu_{m\pm}}^{\mp} \, p \, dp dx
\nonumber\\
&=&
\int_{ikm} Q(f_h) g_h \, p^2 dp d\mu dx   \, .
\nonumber
\end{eqnarray}

The Numerical Flux used is the Upwind Rule. Therefore we have that

\begin{eqnarray}
\widehat{ f_h  \mu }|_{x_{i\pm}} &=& 
\left(\frac{\mu + |\mu|}{2} \right) f_h |^-_{x_{i\pm}} +
\left(\frac{\mu - |\mu|}{2} \right) f_h |^+_{x_{i\pm}}
\nonumber\\
- \widehat{ qE \mu f_h  } |_{p_{k\pm}} & =&
\left(\frac{- qE \mu  + |qE \mu |}{2}\right) f_h |^-_{p_{k\pm}} +
\left(\frac{- qE \mu  - |qE \mu |}{2} \right) f_h |^+_{p_{k\pm}}
\nonumber\\
- \widehat{ qE f_h }|_{\mu_{m\pm}} & = &
\left(\frac{-qE + |qE|}{2} \right) f_h |^-_{\mu_{m\pm}} +
\left(\frac{-qE - |qE|}{2} \right) f_h |^+_{\mu_{m\pm}}
\nonumber
\end{eqnarray}

Using the notation in the paper of Eindeve, Hauck, Xing, Mezzacappa \cite{EECHXM-JCP}, the semi-discrete DG formulation is written as: \\

Find $f_h \, \in \, V_h^k$ such that $\forall \, g_h \, \in V_h^k$ 
and $\, \forall \, \Omega_{ikm} $

\begin{eqnarray}
\partial_t  \int_{\Omega_{ikm}}  f_h \, g_h \, p^2 dp d\mu dx 
&&
\nonumber\\
-\int_{\Omega_{ikm}} H^{(x)}  \, f_h \, \partial_x g_h \, p^2 dp d\mu dx \,
&\pm &
 \int_{\tilde{\Omega}^{(x)}_{km}}  \,  \widehat{H^{(x)}f_h}|_{x_{i\pm}} \, g_h|_{x_{i\pm}}^{\mp} \, p^2 dp d\mu  
\nonumber\\
- \int_{\Omega_{ikm}} { p^2 } \,
H^{(p)}  f_h \, \partial_p g_h \, d\mu dx
& \pm & 
p^2_{k^\pm}
\int_{\tilde{\Omega}^{(p)}_{im}}  \,
\widehat{H^{(p)}f_h}|_{p_{k\pm}} \, g_h|_{p_{k\pm}}^{\mp} \, d\mu dx
\nonumber\\
- \int_{\Omega_{ikm}}
{ (1-\mu^2) H^{(\mu)} f_h } \, \partial_{\mu} g_h \, p \, dp d\mu dx 
&\pm &
(1-\mu_{m\pm}^2)
\int_{\tilde{\Omega}^{(\mu)}_{ik}} 
\widehat{H^{(\mu)}f_h}|_{\mu_{m\pm}} \, g_h|_{\mu_{m\pm}}^{\mp} \, p \, dp dx
\nonumber\\
&=&
\int_{\Omega_{ikm}} Q(f_h) g_h \, p^2 dp d\mu dx   \, ,
\nonumber
\end{eqnarray}

where we have defined the terms ($\partial_p \varepsilon(p)>0$):

\begin{eqnarray}
H^{(x)}(p,\mu) = \mu \, \partial_p \varepsilon (p) 
\, , 
&
 \widehat{H^{(x)}f}|_{x_{i\pm}}
= \partial_p \varepsilon
\widehat{ f_h  \mu }|_{x_{i\pm}} \, ,
&
\tilde{\Omega}^{(x)}_{km} = [r_{k-},r_{k+}]\times[\mu_{m-},\mu_{m+}] 
= \partial_x \Omega_{km}
\, ,
\nonumber\\
H^{(p)}(t,x,\mu) = -q E(x,t) \mu
\, ,
&
\widehat{H^{(p)}f}|_{p_{k\pm}}
=
 -q \widehat{ E f_h \mu } |_{p_{k\pm}} 
\, ,
&
\tilde{\Omega}^{(p)}_{im} = [x_{i-},x_{i+}]\times[\mu_{m-},\mu_{m+}] 
= \partial_p \Omega_{im}
\, ,
\nonumber\\
H^{(\mu)}(x,t) 
=
-qE(x,t)
\, ,
&
\widehat{H^{(\mu)}f}|_{\mu_{m\pm}}
=
-q  \widehat{ E f_h }|_{\mu_{m\pm}} 
\, ,
&
\tilde{\Omega}^{(\mu)}_{ik} = [x_{i-},x_{i+}]\times[r_{k-},r_{k+}] 
= \partial_{\mu}\Omega_{ik}
\, ,
\nonumber
\end{eqnarray}

The weak form of the collisional operator in the DG scheme is, specifically

\begin{eqnarray}
&&
\int_{\Omega_{ikm}} Q(f_h) \, g_h \, p^2 \, dp d\mu dx   \, 
 = 
\int_{\Omega_{ikm}} \left[ G(f_h) - \nu(\varepsilon(p)) f_h \right] \, g_h \, p^2 \, dp d\mu dx   \, =
\nonumber\\
&&
2\pi 
\int_{\Omega_{ikm}} 
\left( 
\sum_{j=-1}^{+1} c_j \, \chi(\varepsilon(p) + j\hbar\omega)
\int_{-1}^{+1} d\mu' \left. \left[
f_h(x,p(\varepsilon'),\mu') \, p^2(\varepsilon') \frac{dp'}{d\varepsilon'} \right] \right|_{\varepsilon' = \varepsilon(p) + j\hbar\omega }  
\right) 
g_h \, p^2 \, dp d\mu dx  
\nonumber\\
&&
-
4\pi 
\int_{\Omega_{ikm}} 
f_h(x,p,\mu,t)  
\left(
\sum_{j=-1}^{+1} c_j \, \chi(\varepsilon(p) - j\hbar\omega)  \left. \left[ p^2(\varepsilon') \frac{dp'}{d\varepsilon'} \right] \right|_{\varepsilon' = \varepsilon(p) - j\hbar\omega } 
\right)
g_h \, p^2 \, dp d\mu dx  
\end{eqnarray}

The cell average of $f_h$ in $\Omega_{ikm}$ is

\begin{equation}
\bar{f}_{ikm} = \frac{\int_{\Omega_{ikm}} f_h \,  p^2 \, dp d\mu dx }{\int_{\Omega_{ikm}} p^2 \, dp d\mu dx} = 
\frac{\int_{\Omega_{ikm}} f_h \, dV  }{ V_{ikm} }, \,
\end{equation}

where, for our particular curvilinear coordinates, spherical:

\begin{equation}
V_{ikm} = \int_{\Omega_{ikm}} dV \, , \, 
dV = \tau \prod_{d=1}^3 z_d ,
\quad
(z_1,z_2,z_3) = \mathbf{z} =
(x,p,\mu), 
\quad
\tau = \sqrt{\gamma\lambda}, \,
\gamma = 1, \, \lambda = p^2
\end{equation}

The time evolution of the cell average in the DG scheme is given by

\begin{eqnarray}
&&
\partial_t \bar{f}_{ikm} = 
\nonumber\\
& - & \frac{1}{V_{ikm}} \left[
 \int_{\partial_x \Omega_{km}}  \,  \widehat{H^{(x)}f_h}|_{x_{i+}} 
 \, p^2 dp d\mu  
-
 \int_{\partial_x \Omega_{km}}  \,  \widehat{H^{(x)}f_h}|_{x_{i-}}  
 \, p^2 dp d\mu  
\right. 
\nonumber\\
& + & 
p^2_{k^+}
\int_{\partial_p \Omega_{im}}  \,
\widehat{H^{(p)}f_h}|_{p_{k+}} 
\, d\mu dx
-  
p^2_{k^-}
\int_{\partial_p \Omega_{im}}  \,
\widehat{H^{(p)}f_h}|_{p_{k-}} 
\, d\mu dx
\nonumber\\
&+ &
\left.
(1-\mu_{m+}^2)
\int_{\partial_{\mu}\Omega_{ik}} 
\widehat{H^{(\mu)}f_h}|_{\mu_{m+}}
\, p \, dp dx
-
(1-\mu_{m-}^2)
\int_{\partial_{\mu}\Omega_{ik}} 
\widehat{H^{(\mu)}f_h}|_{\mu_{m-}} 
\, p \, dp dx
\right]
\nonumber\\
& + &
\left[
{2\pi}  
\int_{\Omega_{ikm}} 
\left( 
\sum_{j=-1}^{+1} c_j \, \chi(\varepsilon(p) + j\hbar\omega)
\int_{-1}^{+1} d\mu' \left. \left[
f_h(x,p(\varepsilon'),\mu') \, p^2(\varepsilon') \frac{dp'}{d\varepsilon'} \right] \right|_{\varepsilon' = \varepsilon(p) + j\hbar\omega }  
\right) 
\, p^2 \, dp d\mu dx  
\right.
\nonumber\\
&&
\left.
-
{4\pi} 
\int_{\Omega_{ikm}} 
f_h(x,p,\mu,t)  
\left(
\sum_{j=-1}^{+1} c_j \, \chi(\varepsilon(p) - j\hbar\omega)  \left. \left[ p^2(\varepsilon') \frac{dp'}{d\varepsilon'} \right] \right|_{\varepsilon' = \varepsilon(p) - j\hbar\omega } 
\right)
\, p^2 \, dp d\mu dx  
\right]\frac{1}{V_{ikm}}
\nonumber
\end{eqnarray}

Regarding the time discretization, we will apply a TVD RK-DG scheme. These schemes are convex combinations of Euler methods. 
We consider therefore the time evolution of the cell average in the DG scheme using Forward Euler: $\partial_t \bar{f}_{ikm} \approx (\bar{f}_{ikm}^{n+1} - \bar{f}_{ikm}^n)/\Delta t^n $ 


\begin{eqnarray}
&&
\bar{f}_{ikm}^{n+1} = \bar{f}_{ikm}^n
\nonumber\\
& - & \frac{\Delta t^n}{V_{ikm}} \left[
 \int_{\partial_x \Omega_{km}}  \,  \widehat{H^{(x)}f_h}|_{x_{i+}} 
 \, p^2 dp d\mu  
-
 \int_{\partial_x \Omega_{km}}  \,  \widehat{H^{(x)}f_h}|_{x_{i-}}  
 \, p^2 dp d\mu  
\right. 
\nonumber\\
& + & 
p^2_{k^+}
\int_{\partial_p \Omega_{im}}  \,
\widehat{H^{(p)}f_h}|_{p_{k+}} 
\, d\mu dx
-  
p^2_{k^-}
\int_{\partial_p \Omega_{im}}  \,
\widehat{H^{(p)}f_h}|_{p_{k-}} 
\, d\mu dx
\nonumber\\
&+ &
\left.
(1-\mu_{m+}^2)
\int_{\partial_{\mu}\Omega_{ik}} 
\widehat{H^{(\mu)}f_h}|_{\mu_{m+}}
\, p \, dp dx
-
(1-\mu_{m-}^2)
\int_{\partial_{\mu}\Omega_{ik}} 
\widehat{H^{(\mu)}f_h}|_{\mu_{m-}} 
\, p \, dp dx
\right]
\nonumber\\
& + &
\left[
{2\pi}  
\int_{\Omega_{ikm}} 
\left( 
\sum_{j=-1}^{+1} c_j \, \chi(\varepsilon(p) + j\hbar\omega)
\int_{-1}^{+1} d\mu' \left. \left[
f_h(x,p(\varepsilon'),\mu') \, p^2(\varepsilon') \frac{dp'}{d\varepsilon'} \right] \right|_{\varepsilon' = \varepsilon(p) + j\hbar\omega }  
\right) 
\, p^2 \, dp d\mu dx  
\right.
\nonumber\\
&&
\left.
-
{4\pi} 
\int_{\Omega_{ikm}} 
f_h(x,p,\mu,t)  
\left(
\sum_{j=-1}^{+1} c_j \, \chi(\varepsilon(p) - j\hbar\omega)  \left. \left[ p^2(\varepsilon') \frac{dp'}{d\varepsilon'} \right] \right|_{\varepsilon' = \varepsilon(p) - j\hbar\omega } 
\right)
\, p^2 \, dp d\mu dx  
\right]\frac{\Delta t^n}{V_{ikm}}
\nonumber
\end{eqnarray}

or, more briefly

\begin{equation}
\bar{f}_{ikm}^{n+1} = \bar{f}_{ikm}^n + \Gamma_T + \Gamma_C
\end{equation}

where the transport and collision terms for the cell average time evolution are defined as

\begin{eqnarray}
\Gamma_T
& = & - \frac{\Delta t^n}{V_{ikm}} \left[
 \int_{\partial_x \Omega_{km}}  \,  \widehat{H^{(x)}f_h}|_{x_{i+}} 
 \, p^2 dp d\mu  
-
 \int_{\partial_x \Omega_{km}}  \,  \widehat{H^{(x)}f_h}|_{x_{i-}}  
 \, p^2 dp d\mu  
\right. 
\nonumber\\
& + & 
p^2_{k^+}
\int_{\partial_p \Omega_{im}}  \,
\widehat{H^{(p)}f_h}|_{p_{k+}} 
\, d\mu dx
-  
p^2_{k^-}
\int_{\partial_p \Omega_{im}}  \,
\widehat{H^{(p)}f_h}|_{p_{k-}} 
\, d\mu dx
\nonumber\\
&+ &
\left.
(1-\mu_{m+}^2)
\int_{\partial_{\mu}\Omega_{ik}} 
\widehat{H^{(\mu)}f_h}|_{\mu_{m+}}
\, p \, dp dx
-
(1-\mu_{m-}^2)
\int_{\partial_{\mu}\Omega_{ik}} 
\widehat{H^{(\mu)}f_h}|_{\mu_{m-}} 
\, p \, dp dx
\right]
\nonumber
\end{eqnarray}

\begin{eqnarray}
\Gamma_C
& = &
\left[
{2\pi}  
\int_{\Omega_{ikm}} 
\left( 
\sum_{j=-1}^{+1} c_j \, \chi(\varepsilon(p) + j\hbar\omega)
\int_{-1}^{+1} d\mu' \left. \left[
f_h(x,p(\varepsilon'),\mu') \, p^2(\varepsilon') \frac{dp'}{d\varepsilon'} \right] \right|_{\varepsilon' = \varepsilon(p) + j\hbar\omega }  
\right) 
\, p^2 \, dp d\mu dx  
\right.
\nonumber\\
&&
\left.
-
{4\pi} 
\int_{\Omega_{ikm}} 
f_h(x,p,\mu,t)  
\left(
\sum_{j=-1}^{+1} c_j \, \chi(\varepsilon(p) - j\hbar\omega)  \left. \left[ p^2(\varepsilon') \frac{dp'}{d\varepsilon'} \right] \right|_{\varepsilon' = \varepsilon(p) - j\hbar\omega } 
\right)
\, p^2 \, dp d\mu dx  
\right]\frac{\Delta t^n}{V_{ikm}}
\nonumber
\end{eqnarray}

\subsection{Positivity Preservation in DG Scheme for BP}

We use the strategy of Zhang \& Shu in \cite{ZhangShu1}, \cite{ZhangShu2}, for conservation laws, Eindeve, Hauck, Xing, Mezzacappa \cite{EECHXM-JCP} for conservative phase space advection in curvilinear
coordinates, and Cheng, Gamba, Proft for Vlasov-Boltzmann with a linear non-degenerate collisional forms  \cite{CGP} to preserve the positivity of our probability density function in our DG scheme treating the collision term as a source, this being possible as our collisional form is mass preserving. 
We will use a convex combination parameter $\alpha \in [0,1]$
\begin{equation}
\bar{f}_{ikm}^{n+1} = 
\alpha 
\underbrace{
\left( \bar{f}_{ikm}^n + \frac{\Gamma_T}{\alpha} \right) 
}_{I}
+ 
(1 - \alpha)
\underbrace{
\left( \bar{f}_{ikm}^n + \frac{\Gamma_C}{1-\alpha}\right)  
}_{II}
\end{equation}
and we will find conditions such that $I$ and $II$ are positive,
to guarantee the positivity of the cell average of our numerical probability density function for the next time step. 
The positivity of the numerical solution to the pdf in the whole domain can be guaranteed just by applying the limiters in \cite{ZhangShu1}, \cite{ZhangShu2} that preserve the cell average but modify the slope of the piecewise linear solutions in order to make the function non - negative.




Regarding $I$, the conditions for its positivity are derived below.

\begin{eqnarray}
I & = &
 \,  \bar{f}_{ikm}^n + \frac{\Gamma_T}{\alpha}   
\, = \,
\frac{\int_{\Omega_{ikm}} f_h \,  p^2 \, dp d\mu dx }{ V_{ikm} }
\nonumber\\
&&
-  \frac{\Delta t^n}{ \alpha V_{ikm}} \left[
 \int_{\partial_x \Omega_{km}}  \,  \widehat{H^{(x)}f_h}|_{x_{i+}} 
 \, p^2 dp d\mu  
-
 \int_{\partial_x \Omega_{km}}  \,  \widehat{H^{(x)}f_h}|_{x_{i-}}  
 \, p^2 dp d\mu  
\right. 
\nonumber\\
& & +  
p^2_{k^+}
\int_{\partial_p \Omega_{im}}  \,
\widehat{H^{(p)}f_h}|_{p_{k+}} 
\, d\mu dx
-  
p^2_{k^-}
\int_{\partial_p \Omega_{im}}  \,
\widehat{H^{(p)}f_h}|_{p_{k-}} 
\, d\mu dx
\nonumber\\
&& +
\left.
(1-\mu_{m+}^2)
\int_{\partial_{\mu}\Omega_{ik}} 
\widehat{H^{(\mu)}f_h}|_{\mu_{m+}}
\, p \, dp dx
-
(1-\mu_{m-}^2)
\int_{\partial_{\mu}\Omega_{ik}} 
\widehat{H^{(\mu)}f_h}|_{\mu_{m-}} 
\, p \, dp dx
\right]
\nonumber
\end{eqnarray}

We will split the cell average using 3 convex parameters
$s_l \geq 0 , \, l=1,2,3 \,$ s.t. $s_1 + s_2 + s_3 = 1$.
We have then

\begin{eqnarray}
I &=& 
 \frac{ 1 }{ V_{ikm} }
\left[
(s_1 + s_2 + s_3) \int_{\Omega_{ikm}} f_h \,  p^2 \, dp d\mu dx
\right.
\nonumber\\
&&
-  \frac{\Delta t^n}{ \alpha } \left(
 \int_{\partial_x \Omega_{km}}  \,  \widehat{H^{(x)}f_h}|_{x_{i+}} 
 \, p^2 dp d\mu  
-
 \int_{\partial_x \Omega_{km}}  \,  \widehat{H^{(x)}f_h}|_{x_{i-}}  
 \, p^2 dp d\mu  
\right. 
\nonumber\\
& & +  
p^2_{k^+}
\int_{\partial_p \Omega_{im}}  \,
\widehat{H^{(p)}f_h}|_{p_{k+}} 
\, d\mu dx
-  
p^2_{k^-}
\int_{\partial_p \Omega_{im}}  \,
\widehat{H^{(p)}f_h}|_{p_{k-}} 
\, d\mu dx
\nonumber\\
&& +
\left.
\left.
(1-\mu_{m+}^2)
\int_{\partial_{\mu}\Omega_{ik}} 
\widehat{H^{(\mu)}f_h}|_{\mu_{m+}}
\, p \, dp dx
-
(1-\mu_{m-}^2)
\int_{\partial_{\mu}\Omega_{ik}} 
\widehat{H^{(\mu)}f_h}|_{\mu_{m-}} 
\, p \, dp dx
\right)
\right]
\nonumber
\end{eqnarray}
\begin{eqnarray}
& = &
 \frac{ 1 }{ V_{ikm} }
\left[
s_1 \int_{x_{i-}}^{x_{i+}} \int_{\partial_x \Omega_{km}} f_h \,  p^2 \, dp d\mu dx
-  \frac{\Delta t^n}{ \alpha } \left(
 \int_{\partial_x \Omega_{km}}  \,  \widehat{H^{(x)}f_h}|_{x_{i+}} 
 \, p^2 dp d\mu  
-
 \int_{\partial_x \Omega_{km}}  \,  \widehat{H^{(x)}f_h}|_{x_{i-}}  
 \, p^2 dp d\mu  
\right) 
\right.
\nonumber\\
& & +  s_2 \int_{p_{k-}}^{p_{k+}} \int_{\partial_p \Omega_{im}} f_h \,  p^2 \, dp d\mu dx
-  \frac{\Delta t^n}{ \alpha } \left(
p^2_{k^+}
\int_{\partial_p \Omega_{im}}  \,
\widehat{H^{(p)}f_h}|_{p_{k+}} 
\, d\mu dx
-  
p^2_{k^-}
\int_{\partial_p \Omega_{im}}  \,
\widehat{H^{(p)}f_h}|_{p_{k-}} 
\, d\mu dx
\right)
\nonumber\\
&& +
s_3 \int_{\mu_{m-}}^{\mu_{m+}} \int_{\partial_{\mu}\Omega_{ik}} 
 f_h \,  p^2 \, dp d\mu dx
\nonumber\\
&&
\left.
-  \frac{\Delta t^n}{ \alpha } \left(
(1-\mu_{m+}^2)
\int_{\partial_{\mu}\Omega_{ik}} 
\widehat{H^{(\mu)}f_h}|_{\mu_{m+}}
\, p \, dp dx
-
(1-\mu_{m-}^2)
\int_{\partial_{\mu}\Omega_{ik}} 
\widehat{H^{(\mu)}f_h}|_{\mu_{m-}} 
\, p \, dp dx
\right)
\right]
\nonumber\\
& = &
 \frac{ 1 }{ V_{ikm} }
\left[
 \int_{\partial_x \Omega_{km}} 
\left\lbrace 
s_1 \int_{x_{i-}}^{x_{i+}} f_h \,  p^2 \,  dx
-  \frac{\Delta t^n}{ \alpha } \left(
\widehat{H^{(x)}f_h}|_{x_{i+}} 
 \, p^2   
-
\widehat{H^{(x)}f_h}|_{x_{i-}}  
 \, p^2   
\right)
\right\rbrace 
dp \, d\mu 
\right.
\nonumber\\
& & +   \int_{\partial_p \Omega_{im}} 
\left\lbrace
s_2 \int_{p_{k-}}^{p_{k+}}
f_h \,  p^2 \, dp 
-  \frac{\Delta t^n}{ \alpha } \left(
p^2_{k^+}
\widehat{H^{(p)}f_h}|_{p_{k+}} 
-  
p^2_{k^-}
\widehat{H^{(p)}f_h}|_{p_{k-}} 
\right)
\right\rbrace
d\mu \, dx
\nonumber\\
&& +
\left.
 \int_{\partial_{\mu}\Omega_{ik}} 
\left\lbrace 
s_3 \int_{\mu_{m-}}^{\mu_{m+}}
 f_h \,  p^2 \, d\mu 
-  \frac{\Delta t^n}{ \alpha } \left[
(1-\mu_{m+}^2)
\widehat{H^{(\mu)}f_h}|_{\mu_{m+}}
\, p 
\, - \,
(1-\mu_{m-}^2)
\widehat{H^{(\mu)}f_h}|_{\mu_{m-}} 
\, p 
\right]
\right\rbrace \, dp dx
\right]
\nonumber
\end{eqnarray}

All the functions to be integrated are polynomials inside a given interval, rectangle or element. Therefore, we can integrate them exactly using a quadrature rule of enough degree, which could be either the usual Gaussian quadrature or the Gauss-Lobatto, which involves the end-points of the interval. 
We use Gauss-Lobatto quadratures for the integrals of $f_h \, p^2$ over intervals, so that the values at the endpoints can balance the flux terms of boundary integrals, obtaining then CFL conditions.

\begin{eqnarray}
I & = &
 \frac{ 1 }{ V_{ikm} }
\left[
 \int_{\partial_x \Omega_{km}} 
\left\lbrace 
s_1 
\sum_{\hat{q}=1}^N \hat{w}_{\hat{q}}
f_h|_{x_{\hat{q}}} \,  p^2 \,  \Delta x_i
-  \frac{\Delta t^n}{ \alpha } \left(
\widehat{H^{(x)}f_h}|_{x_{i+}} 
 \, p^2   
-
\widehat{H^{(x)}f_h}|_{x_{i-}}  
 \, p^2   
\right)
\right\rbrace 
dp \, d\mu 
\right.
\nonumber\\
& & +   \int_{\partial_p \Omega_{im}} 
\left\lbrace
s_2 
\sum_{\hat{r}=1}^N \hat{w}_{\hat{r}}
f_h|_{p_{\hat{r}}} \,  p^2_{\hat{r}} \, \Delta p_k 
-  \frac{\Delta t^n}{ \alpha } \left(
p^2_{k^+}
\widehat{H^{(p)}f_h}|_{p_{k+}} 
-  
p^2_{k^-}
\widehat{H^{(p)}f_h}|_{p_{k-}} 
\right)
\right\rbrace
d\mu \, dx
\nonumber\\
&& +
\left.
 \int_{\partial_{\mu}\Omega_{ik}} 
\left\lbrace 
s_3 
\sum_{\hat{s}=1}^N \hat{w}_{\hat{s}}
 f_h|_{\mu_{\hat{s}}} \,  p^2 \, \Delta \mu_m 
-  \frac{\Delta t^n}{ \alpha } \left[
(1-\mu_{m+}^2)
\widehat{H^{(\mu)}f_h}|_{\mu_{m+}}
\, p 
\, - \,
(1-\mu_{m-}^2)
\widehat{H^{(\mu)}f_h}|_{\mu_{m-}} 
\, p 
\right]
\right\rbrace \, dp dx
\right]
\nonumber\\
 & = &
\left[
 \int_{\partial_x \Omega_{km}} 
\left\lbrace 
s_1 \Delta x_i  
\left(
\hat{w}_1 f_h|_{x_{i-}}^+  +
\hat{w}_N f_h|_{x_{i+}}^-  + 
  \sum_{\hat{q}=2}^{N-1} \hat{w}_{\hat{q}}
f_h|_{x_{\hat{q}}}  
\right)
-  \frac{\Delta t^n}{ \alpha } \left(
\widehat{H^{(x)}f_h}|_{x_{i+}} 
-
\widehat{H^{(x)}f_h}|_{x_{i-}}  
\right)
\right\rbrace 
p^2 
dp d\mu 
\right.
\nonumber\\
& & +   \int_{\partial_p \Omega_{im}} 
\left\lbrace
s_2 
\left(
\hat{w}_{1} f_h|_{p_{k-}}^+ \,  p^2_{k-} +
\hat{w}_{N}  f_h|_{p_{k+}}^- \,  p^2_{k+} +
\sum_{\hat{r}=2}^{N-1} \hat{w}_{\hat{r}}
f_h|_{p_{\hat{r}}} \,  p^2_{\hat{r}} \, 
\right)
\Delta p_k 
\right.
\nonumber\\
&&
\left.
\quad \quad
-  \frac{\Delta t^n}{ \alpha } \left(
p^2_{k^+}
\widehat{H^{(p)}f_h}|_{p_{k+}} 
-  
p^2_{k^-}
\widehat{H^{(p)}f_h}|_{p_{k-}} 
\right)
\right\rbrace
d\mu \, dx
\nonumber\\
&& +
 \int_{\partial_{\mu}\Omega_{ik}} 
\left\lbrace 
s_3 
\left(
\hat{w}_{1} f_h|_{\mu_{m-}}^+
+
\hat{w}_{N} f_h|_{\mu_{m+}}^-
+
\sum_{\hat{s}=2}^{N-1} 
\hat{w}_{\hat{s}} f_h|_{\mu_{\hat{s}}}
\right) 
 p^2 \, \Delta \mu_m 
\right.
\nonumber\\
&&
\left.
\left. 
\quad \quad
-  \frac{\Delta t^n}{ \alpha } \left[
(1-\mu_{m+}^2)
\widehat{H^{(\mu)}f_h}|_{\mu_{m+}}
\, - \,
(1-\mu_{m-}^2)
\widehat{H^{(\mu)}f_h}|_{\mu_{m-}} 
\right] p
\right\rbrace \, dp dx
\right] \frac{ 1 }{ V_{ikm} }
\nonumber\\
 & = &
\left[
 \int_{\partial_x \Omega_{km}} 
s_1 \Delta x_i
\left\lbrace 
  \sum_{\hat{q}=2}^{N-1} \hat{w}_{\hat{q}}
f_h|_{x_{\hat{q}}}  
+
\left(
\hat{w}_1 f_h|_{x_{i-}}^+  +
\hat{w}_N f_h|_{x_{i+}}^-  
\right)
-  \frac{\Delta t^n}{ \alpha s_1 \Delta x_i} \left(
\widehat{H^{(x)}f_h}|_{x_{i+}} 
-
\widehat{H^{(x)}f_h}|_{x_{i-}}  
\right)
\right\rbrace 
p^2 
dp d\mu 
\right.
\nonumber\\
& & +   \int_{\partial_p \Omega_{im}} 
s_2 \Delta p_k  
\left\lbrace
\left(
\hat{w}_{1} f_h|_{p_{k-}}^+ \,  p^2_{k-}  +
\hat{w}_{N}  f_h|_{p_{k+}}^- \,  p^2_{k+} 
\right)
+
\sum_{\hat{r}=2}^{N-1} \hat{w}_{\hat{r}}
f_h|_{p_{\hat{r}}} \,  p^2_{\hat{r}} \, 
\right.
\nonumber\\
&&
\left.
\quad \quad
-  \frac{\Delta t^n}{ \alpha s_2 \Delta p_k } \left(
p^2_{k^+}
\widehat{H^{(p)}f_h}|_{p_{k+}} 
-  
p^2_{k^-}
\widehat{H^{(p)}f_h}|_{p_{k-}} 
\right)
\right\rbrace
d\mu \, dx
\nonumber\\
&& +
 \int_{\partial_{\mu}\Omega_{ik}} 
s_3 \,  p^2 \, \Delta \mu_m 
\left\lbrace 
\left(
\hat{w}_{1} f_h|_{\mu_{m-}}^+
+
\hat{w}_{N} f_h|_{\mu_{m+}}^-
\right)
+
\sum_{\hat{s}=2}^{N-1} 
\hat{w}_{\hat{s}} f_h|_{\mu_{\hat{s}}}
\right.
\nonumber\\
&&
\left.
\left. 
\quad \quad
-  \frac{\Delta t^n}{ \alpha s_3 p \, \Delta \mu_m } \left[
(1-\mu_{m+}^2)
\widehat{H^{(\mu)}f_h}|_{\mu_{m+}}
\, - \,
(1-\mu_{m-}^2)
\widehat{H^{(\mu)}f_h}|_{\mu_{m-}} 
\right] 
\right\rbrace  dp dx
\right] \frac{ 1 }{ V_{ikm} }
\nonumber
\end{eqnarray}


We reorganize the terms involving the endpoints, which are in parenthesis. So

\begin{eqnarray}
I & = &
 \frac{ 1 }{ V_{ikm} }
\left[
 \int_{\partial_x \Omega_{km}} 
s_1 \Delta x_i
\left\lbrace 
\left(
\hat{w}_1 f_h|_{x_{i-}}^+  
+  \frac{\Delta t^n}{ \alpha s_1 \Delta x_i} 
\widehat{H^{(x)}f_h}|_{x_{i-}}  
\right)
+
\left(
\hat{w}_N f_h|_{x_{i+}}^-  
-  \frac{\Delta t^n}{ \alpha s_1 \Delta x_i} 
\widehat{H^{(x)}f_h}|_{x_{i+}} 
\right)
\right.
\right.
\nonumber\\
&&
+
\left. \,
  \sum_{\hat{q}=2}^{N-1} \hat{w}_{\hat{q}}
f_h|_{x_{\hat{q}}}  
\right\rbrace 
p^2 
dp d\mu 
 +   \int_{\partial_p \Omega_{im}} 
s_2 \Delta p_k  
\left\lbrace
\sum_{\hat{r}=2}^{N-1} \hat{w}_{\hat{r}}
f_h|_{p_{\hat{r}}} \,  p^2_{\hat{r}} \, 
+
\right.
\nonumber\\
&&
\left.
\quad \quad
+
p^2_{k^-}
\left(
\hat{w}_{1} f_h|_{p_{k-}}^+ \, 
+  \frac{\Delta t^n}{ \alpha s_2 \Delta p_k } 
\widehat{H^{(p)}f_h}|_{p_{k-}} 
\right)
+
p^2_{k+} 
\left(
\hat{w}_{N}  f_h|_{p_{k+}}^- \,
-  \frac{\Delta t^n}{ \alpha s_2 \Delta p_k } 
\widehat{H^{(p)}f_h}|_{p_{k+}} 
\right)
\right\rbrace
d\mu \, dx
\nonumber\\
&& +
 \int_{\partial_{\mu}\Omega_{ik}} 
dx \, dp \, p^2 \,
s_3 \,  \Delta \mu_m 
\left\lbrace 
\sum_{\hat{s}=2}^{N-1} 
\hat{w}_{\hat{s}} f_h|_{\mu_{\hat{s}}} \, + \,
\right.
\nonumber\\
&&
\left.
\left. 
\quad 
+
\left(
\hat{w}_{1} f_h|_{\mu_{m-}}^+
+  \frac{\Delta t^n (1-\mu_{m-}^2)
}{ \alpha s_3 p \, \Delta \mu_m }
\widehat{H^{(\mu)}f_h}|_{\mu_{m-}} 
\right)
+
\left(
\hat{w}_{N} f_h|_{\mu_{m+}}^-
-  \frac{\Delta t^n (1-\mu_{m+}^2) }{ \alpha s_3 p \, \Delta \mu_m } 
\widehat{H^{(\mu)}f_h}|_{\mu_{m+}}
\right)
\right\rbrace  
\right] 
\nonumber
\end{eqnarray}

To guarantee the positivity of $I$, assuming that the terms $f_h|_{x_{\hat{q}}}, \, f_h|_{p_{\hat{r}}}, \, f_h|_{\mu_{\hat{s}}} $
are positive at time $t^n$, we only need that the terms in parenthesis related to interval endpoints are positive. 
Since $\hat{w}_1 = \hat{w}_N$ for Gauss-Lobatto Quadrature, 
we want the non-negativity of the terms 

\begin{eqnarray}
0 & \leq & 
\left(
\hat{w}_N f_h|_{x_{i\pm}}^{\mp}  
\mp  \frac{\Delta t^n}{ \alpha s_1 \Delta x_i} 
\widehat{H^{(x)}f_h}|_{x_{i\pm}} 
\right)
\nonumber\\
0 & \leq & 
\left(
\hat{w}_{N}  f_h|_{p_{k\pm}}^{\mp} \,
\mp \frac{\Delta t^n}{ \alpha s_2 \Delta p_k } 
\widehat{H^{(p)}f_h}|_{p_{k\pm}} 
\right)
\\
0 & \leq &
\left(
\hat{w}_{N} f_h|_{\mu_{m\pm}}^{\mp}
\mp  \frac{\Delta t^n (1-\mu_{m\pm}^2) }{ \alpha s_3 p \, \Delta \mu_m } 
\widehat{H^{(\mu)}f_h}|_{\mu_{m\pm}}
\right)
\nonumber
\end{eqnarray}

We remember that we have used the following notation for the numerical flux terms, given by the upwind rule

\begin{eqnarray}
 \widehat{H^{(x)}f}|_{x_{i\pm}}
&=& 
\partial_p \varepsilon
\widehat{ f_h  \mu }|_{x_{i\pm}} 
= \partial_p \varepsilon
\left[
\left(\frac{\mu + |\mu|}{2} \right) f_h |^-_{x_{i\pm}} +
\left(\frac{\mu - |\mu|}{2} \right) f_h |^+_{x_{i\pm}}
\right]
\nonumber\\
\widehat{H^{(p)}f}|_{p_{k\pm}}
&=&
- q \widehat{ E f_h \mu } |_{p_{k\pm}} 
=
q
\left[
\left(\frac{ -E(x,t) \mu  + |E(x,t) \mu |}{2}\right) f_h |^-_{p_{k\pm}} +
\left(\frac{-E(x,t) \mu  - |E(x,t) \mu |}{2} \right) f_h |^+_{p_{k\pm}}
\right]
\nonumber\\
\widehat{H^{(\mu)}f}|_{\mu_{m\pm}}
&=&
- q  \widehat{ E f_h }|_{\mu_{m\pm}} =
q
\left[
\left(\frac{-E(x,t) + |E(x,t)|}{2} \right) f_h |^-_{\mu_{m\pm}} +
\left(\frac{-E(x,t) - |E(x,t)|}{2} \right) f_h |^+_{\mu_{m\pm}}
\right]
\nonumber
\end{eqnarray}

We have assumed that the positivity of the pdf evaluated at Gauss-Lobatto points, which include endpoints, so we know 
$ f_h|_{x_{i\pm}}^{\mp} , \, f_h|_{p_{k\pm}}^{\mp} , \,  f_h|_{\mu_{m\pm}}^{\mp} $ are positive. The worst case scenario for positivity is having negative flux terms. In that case,

\begin{eqnarray}
0 & \leq & 
\hat{w}_N f_h|_{x_{i\pm}}^{\mp}  
-  \frac{\Delta t^n}{ \alpha s_1 \Delta x_i} 
\partial_p \varepsilon \, |\mu| f_h|_{x_{i\pm}}^{\mp}
=
f_h|_{x_{i\pm}}^{\mp}
\left(
\hat{w}_N   
-  \frac{\Delta t^n}{ \alpha s_1 \Delta x_i} 
\partial_p \varepsilon \, |\mu| 
\right)
\nonumber\\
0 & \leq & 
\hat{w}_{N}  f_h|_{p_{k\pm}}^{\mp} \,
- \frac{\Delta t^n}{ \alpha s_2 \Delta p_k } 
q |E(x,t) \mu | f_h|_{p_{k\pm}}^{\mp}
=
f_h|_{p_{k\pm}}^{\mp} \,
\left( \hat{w}_{N}
- \frac{\Delta t^n}{ \alpha s_2 \Delta p_k } 
q |E(x,t) \mu | 
\right)
\nonumber\\
0 & \leq &
\hat{w}_{N} f_h|_{\mu_{m\pm}}^{\mp}
-\frac{\Delta t^n (1-\mu_{m\pm}^2) }{ \alpha s_3 p \, \Delta \mu_m } 
q|E(x,t)| f_h|_{\mu_{m\pm}}^{\mp}
=
 f_h|_{\mu_{m\pm}}^{\mp}
\left(
\hat{w}_{N}
-\frac{\Delta t^n (1-\mu_{m\pm}^2) }{ \alpha s_3 p \, \Delta \mu_m } 
q|E(x,t)| 
\right)
\nonumber
\end{eqnarray}

We need then for the worst case scenario that 

\begin{eqnarray}
\hat{w}_N   
& \geq & 
\frac{\Delta t^n}{ \alpha s_1 \Delta x_i} 
\partial_p \varepsilon \, |\mu| 
\nonumber\\
\hat{w}_{N}
& \geq &
 \frac{\Delta t^n}{ \alpha s_2 \Delta p_k } 
q |E(x,t) \mu | 
\nonumber\\
\hat{w}_{N}
& \geq & 
\frac{\Delta t^n (1-\mu_{m\pm}^2) }{ \alpha s_3 p \, \Delta \mu_m } 
q|E(x,t)| \, ,  
\nonumber
\end{eqnarray}

or equivalently,

\begin{eqnarray}
\hat{w}_N   \frac{ \alpha s_1 \Delta x_i}{ 
\partial_p \varepsilon \, |\mu|
}
& \geq & 
 {\Delta t^n} 
\nonumber\\
\hat{w}_{N}
 \frac{ \alpha s_2 \Delta p_k }{ 
q |E(x,t) \mu | 
}
& \geq & {\Delta t^n}
\nonumber\\
\hat{w}_{N}
\frac{ \alpha s_3  \, \Delta \mu_m \, p }{ 
q|E(x,t)| 
(1-\mu_{m\pm}^2)
}
& \geq & \Delta t^n  
\nonumber
\end{eqnarray}

Therefore the CFL conditions imposed to satisfy the positivity of the transport term $I$ are

\begin{eqnarray}
\frac{ \alpha s_1 \hat{w}_N   \Delta x_i}{ 
\max_{\hat{r}}
\partial_p \varepsilon(p_{\hat{r}}) \cdot  \,
\max_{\pm} |\mu_{m\pm}|
}
& \geq & 
 {\Delta t^n} 
\nonumber\\
 \frac{ \alpha s_2 \hat{w}_{N}
  \Delta p_k }{ 
q \max_{\hat{q}} |E(x_{\hat{q}},t)| 
\cdot
\max_{\pm} |\mu_{m\pm} | 
}
& \geq & {\Delta t^n}
\nonumber\\
\frac{ \alpha s_3 \hat{w}_{N} \Delta \mu_m
\cdot p_{k-} \,  }{ 
q
\max_{\hat{q}}
|E(x_{\hat{q}},t)| 
\cdot
\max_{\pm}
(1-\mu_{m\pm}^2)
}
& \geq & \Delta t^n  \, .
\nonumber
\end{eqnarray}



Regarding $II$, there are several ways to guarantee its positivity.\\

One possible way to guarantee its positive is given below,
by separating the gain and the loss part, combining the cell average
with the loss term and deriving a CFL condition related to the collision frequency, and imposing a positivity condition on 
the points where the gain term is evaluated, which differs
for inelastic scatterings from the previous Gauss-Lobatto points because of the addition or subtraction of the phonon energy $\hbar \omega $. We would need an additional set of points in which to impose positivity in order to guarantee positivity of $II$ as a whole, since

\begin{eqnarray}
&&
II  =  \bar{f}_{ikm}^n + \frac{\Gamma_C}{1-\alpha}   =
\nonumber\\
&&
\bar{f}_{ikm}^n
 + 
\left[
{2\pi}  
\int_{\Omega_{ikm}} 
\left( 
\sum_{j=-1}^{+1} c_j \, \chi(\varepsilon(p) + j\hbar\omega)
\int_{-1}^{+1} d\mu' \left. \left[
f_h(x,p(\varepsilon'),\mu') \, p^2(\varepsilon') \frac{dp'}{d\varepsilon'} \right] \right|_{\varepsilon' = \varepsilon(p) + j\hbar\omega }  
\right) 
\, p^2 \, dp d\mu dx  
\right.
\nonumber\\
&&
\left.
-
{4\pi} 
\int_{\Omega_{ikm}} 
f_h(x,p,\mu,t)  
\underbrace{
\left(
\sum_{j=-1}^{+1} c_j \, \chi(\varepsilon(p) - j\hbar\omega)  \left. \left[ p^2(\varepsilon') \frac{dp'}{d\varepsilon'} \right] \right|_{\varepsilon' = \varepsilon(p) - j\hbar\omega } 
\right)
}_{ = \, \nu(p) > 0, \quad \mbox{since} \quad \partial_p\varepsilon > 0, \quad c_j > 0, \quad \chi \geq 0 }
\, p^2 \, dp d\mu dx  
\right]\frac{\Delta t^n}{V_{ikm}(1-\alpha)} =
\nonumber\\
&&
\left[
\frac{2\pi \Delta t^n}{ (1-\alpha)}
\int_{\Omega_{ikm}} 
\left( 
\sum_{j=-1}^{+1} c_j \, \chi(\varepsilon(p) + j\hbar\omega)
\int_{-1}^{+1} d\mu' \left. \left[
f_h(x,p(\varepsilon'),\mu') \, p^2(\varepsilon') \frac{dp'}{d\varepsilon'} \right] \right|_{\varepsilon' = \varepsilon(p) + j\hbar\omega }  
\right) 
\, p^2 \, dp d\mu dx  
\right. +
\nonumber\\
&&
\left.
\int_{\Omega_{ikm}} f_h dV
-
\frac{4\pi \Delta t^n}{ (1-\alpha)}
\int_{\Omega_{ikm}} 
f_h  
\left(
\sum_{j=-1}^{+1} c_j \, \chi(\varepsilon(p) - j\hbar\omega)  \left. \left[ p^2(\varepsilon') \frac{dp'}{d\varepsilon'} \right] \right|_{\varepsilon' = \varepsilon(p) - j\hbar\omega } 
\right)
\, p^2 \, dp d\mu dx  
\right]\frac{1}{V_{ikm}} =
\nonumber\\
&&
\left[
\frac{2\pi \Delta t^n}{ (1-\alpha)}
\sum_{j=-1}^{+1} c_j
\int_{\Omega_{ikm}} 
\int_{-1}^{+1} d\mu' \left. \left[
f_h(x,p(\varepsilon'),\mu') \, p^2(\varepsilon') \frac{dp'}{d\varepsilon'} \right] \right|_{\varepsilon' = \varepsilon(p) + j\hbar\omega }  
\chi(\varepsilon(p) + j\hbar\omega)
\, p^2 \, dp d\mu dx  
\right. +
\nonumber\\
&&
\left.
\int_{\Omega_{ikm}} 
f_h(x,p,\mu,t)  
\left( 1 - \frac{4\pi \Delta t^n}{ (1-\alpha)}
\sum_{j=-1}^{+1} c_j \, \chi(\varepsilon(p) - j\hbar\omega)  \left. \left[ p^2(\varepsilon') \frac{dp'}{d\varepsilon'} \right] \right|_{\varepsilon' = \varepsilon(p) - j\hbar\omega } 
\right)
\, p^2 \, dp d\mu dx  
\right]\frac{1}{V_{ikm}} =
\nonumber\\
&&
\left[
\frac{2\pi \Delta t^n}{ (1-\alpha)}
\sum_{j=-1}^{+1} c_j
|\Omega_{ikm}|
\underbrace{ 
\sum_{s,r,q}  w_{s,r,q} 
f_h(x_s,p'(\varepsilon(p_r) + j\hbar\omega),\mu'_q) \, 
\left[
p'^2(\varepsilon') \frac{dp'}{d\varepsilon'} 
\chi(\varepsilon')
\right] \left\lbrace \varepsilon(p_r) + j\hbar\omega \right\rbrace  
\, p_r^2  
}_{ > 0 \quad \mbox{if} \quad f_h(x_s,p'(\varepsilon(p_r) + j\hbar\omega),\mu'_q) \, > 0 . \quad \mbox{Additional set of points for positivity}  }
\right.
\nonumber\\
&&
+
\left.
\int_{\Omega_{ikm}} 
f_h(x,p,\mu,t)  
\underbrace{
\left( 1 - \frac{4\pi \Delta t^n}{ (1-\alpha)}
\sum_{j=-1}^{+1} c_j \, \chi(\varepsilon(p) - j\hbar\omega)  \left. \left[ p^2(\varepsilon') \frac{dp'}{d\varepsilon'} \right] \right|_{\varepsilon' = \varepsilon(p) - j\hbar\omega } 
\right)
}_{> 0 \, \rightarrow \, (1-\alpha)  \left( max_{GQp} \sum_{j=-1}^{+1} c_j \, \chi(\varepsilon(p) - j\hbar\omega)  \left. \left[ p^2(\varepsilon') \frac{dp'}{d\varepsilon'} \right] \right|_{\varepsilon' = \varepsilon(p) - j\hbar\omega } \right)^{-1} > \Delta t  }
\, p^2 \, dp d\mu dx  
\right]\frac{1}{V_{ikm}} 
\nonumber
\end{eqnarray}

where the notation for the measure of the elements is

\begin{equation}
|\Omega_{ikm}| = \Delta x_i \Delta p_k \Delta \mu_m  \, .
\end{equation}

Another possible way to guarantee positivity for $II$ 
is by considering the collision term as a whole.
The difference between the gain  minus the loss
integrals will give us a smaller source term overall, and therefore a more relaxed CFL condition for $\Delta t^n $. We have that

\begin{eqnarray}
&&
II  =  \bar{f}_{ikm}^n + \frac{\Gamma_C}{1-\alpha}   = \, 
\frac{\int_{\Omega_{ikm}} f_h dV}{V_{ikm}} + 
\frac{\Delta t^n \, \int_{\Omega_{ikm}}Q(f_h)dV}{ (1-\alpha)V_{ikm}}
\,
= 
\,
\frac{\int_{\Omega_{ikm}} f_h dV}{V_{ikm}} +
\nonumber\\
&&
 + 
\left[
{2\pi}  
\int_{\Omega_{ikm}} 
\left( 
\sum_{j=-1}^{+1} c_j \, \chi(\varepsilon(p) + j\hbar\omega)
\int_{-1}^{+1} d\mu' \left. \left[
f_h(x,p(\varepsilon'),\mu') \, p^2(\varepsilon') \frac{dp'}{d\varepsilon'} \right] \right|_{\varepsilon' = \varepsilon(p) + j\hbar\omega }  
\right) 
\, p^2 \, dp d\mu dx  
\right.
\nonumber\\
&&
\left.
-
{4\pi} 
\int_{\Omega_{ikm}} 
f_h(x,p,\mu,t)  
\underbrace{
\left(
\sum_{j=-1}^{+1} c_j \, \chi(\varepsilon(p) - j\hbar\omega)  \left. \left[ p^2(\varepsilon') \frac{dp'}{d\varepsilon'} \right] \right|_{\varepsilon' = \varepsilon(p) - j\hbar\omega } 
\right)
}_{ = \, \nu(p) > 0, \quad \mbox{since} \quad \partial_p\varepsilon > 0, \quad c_j > 0, \quad \chi \geq 0 }
\, p^2 \, dp d\mu dx  
\right]\frac{\Delta t^n}{V_{ikm}(1-\alpha)} =
\nonumber
\end{eqnarray}
\begin{eqnarray}
&&
\left[
\frac{2\pi \Delta t^n}{ (1-\alpha)}
\left\lbrace
\int_{\Omega_{ikm}} 
\left( 
\sum_{j=-1}^{+1} c_j \, \chi(\varepsilon(p) + j\hbar\omega)
\int_{-1}^{+1} d\mu' \left. \left[
f_h(x,p(\varepsilon'),\mu') \, p^2(\varepsilon') \frac{dp'}{d\varepsilon'} \right] \right|_{\varepsilon' = \varepsilon(p) + j\hbar\omega }  
\right) 
\, p^2 \, dp d\mu dx  
\right.
\right. +
\nonumber\\
&&
\left.
- 2 
\left.
\int_{\Omega_{ikm}} 
f_h  
\left(
\sum_{j=-1}^{+1} c_j \, \chi(\varepsilon(p) - j\hbar\omega)  \left. \left[ p^2(\varepsilon') \frac{dp'}{d\varepsilon'} \right] \right|_{\varepsilon' = \varepsilon(p) - j\hbar\omega } 
\right)
\, p^2 \, dp d\mu dx  
\right\rbrace
+ \int_{\Omega_{ikm}} f_h dV
\right]\frac{1}{V_{ikm}} =
\nonumber
\end{eqnarray}
\begin{eqnarray}
&&
\left[
\frac{ \Delta t^n}{ (1-\alpha)}
\int_{\Omega_{ikm}} 
\underbrace{
\left(
2\pi
\sum_{j=-1}^{+1} c_j \, 
\int_{-1}^{+1} d\mu' \left. \left[
f_h(x,p(\varepsilon'),\mu') \, p^2(\varepsilon') \frac{dp'}{d\varepsilon'} \chi(\varepsilon') \right] \right|_{\varepsilon' = \varepsilon(p) + j\hbar\omega }
- f_h \nu(p)   
\right)
}_{Q(f_h)}
\, p^2 \, dp d\mu dx  
\right. 
\nonumber\\
&&
\left. 
+ \int_{\Omega_{ikm}} f_h dV
\right]\frac{1}{V_{ikm}} \, ,
\quad \quad
\nu(p)  =
4 \pi
\sum_{j=-1}^{+1}  c_j \,   \left. \left[ p^2(\varepsilon') \frac{dp'}{d\varepsilon'} \chi(\varepsilon') \right] \right|_{\varepsilon' = \varepsilon(p) - j\hbar\omega } 
= \nu(\varepsilon(p)) \, .
\nonumber
\end{eqnarray}


We will treat then the cell average of the collision term,
the gain minus loss term, as a whole, considering it a source term,
and we will apply the same techniques for positivity preserving DG schemes for transport equations with source terms. We have then that

\begin{eqnarray}
II  &=&  \bar{f}_{ikm}^n + \frac{\Gamma_C}{1-\alpha}   = \, 
\frac{\int_{\Omega_{ikm}} f_h dV}{V_{ikm}} + 
\frac{\Delta t^n \, \int_{\Omega_{ikm}}Q(f_h)dV}{ (1-\alpha)V_{ikm}}
\,
= 
\,
\nonumber\\
&=&   
\frac{1}{V_{ikm}} 
\left[
\int_{\Omega_{ikm}} f_h \, p^2 \, dp d\mu dx
+ 
\frac{\Delta t^n }{ (1-\alpha) }
\int_{\Omega_{ikm}}Q(f_h) \, p^2 \, dp d\mu dx
\right]
\, ,
\nonumber\\
Q(f_h) 
&=& 
2\pi
\sum_{j=-1}^{+1} c_j \, 
\int_{-1}^{+1} d\mu' 
\left. 
f_h(x,p(\varepsilon'),\mu') \, p^2(\varepsilon') \frac{dp'}{d\varepsilon'} \chi(\varepsilon') 
\right|_{\varepsilon' = \varepsilon(p) + j\hbar\omega }
- f_h \nu(p)   
 \, ,
\nonumber\\
\nu(p) 
& = &
4 \pi
\sum_{j=-1}^{+1}  c_j \,   \left. \left[ p^2(\varepsilon') \frac{dp'}{d\varepsilon'} \chi(\varepsilon') \right] \right|_{\varepsilon' = \varepsilon(p) - j\hbar\omega } 
= \nu(\varepsilon(p)) \, .
\end{eqnarray}

We want $II$ to be positive.
If the collision operator part was negative,
we choose the time step $\Delta t^n$ such that $II$ is positive on total. We will get this way our CFL condition in order to guarantee the positivity of $II$. We want that

\begin{eqnarray}
II  &=&  
\frac{1}{V_{ikm}} 
\int_{\Omega_{ikm}} 
\left[
f_h(x,p,\mu ,t) \,
+ 
\frac{\Delta t^n }{ (1-\alpha) }
Q(f_h) (x,p,\mu ,t) 
\right]
\, p^2 \, dp d\mu dx
\, \geq 0 
\nonumber\\
II   &=&  
\frac{|\Omega_{ikm}| }{V_{ikm}} 
\sum_{q,r,s} 
w_q w_r w_s
\left[
f_h(x_q,p_r,\mu_s ,t) \,
+ 
\frac{\Delta t^n }{ (1-\alpha) }
Q(f_h) (x_q,p_r,\mu_s ,t) 
\right]
\, p^2_r 
\, \geq 0 
\nonumber
\end{eqnarray}

If $0 >  Q(f_h) $ for any of the points
$(x_q, p_r, \mu_s)$ at time $t = t^n$, 
then choose $\Delta t^n$ such that

\begin{eqnarray}
0 
& \leq & 
f_h(x_q,p_r,\mu_s ,t) \,
+ 
\frac{\Delta t^n }{ (1-\alpha) }
Q(f_h) (x_q,p_r,\mu_s ,t) 
\nonumber\\
0 
& \leq & 
f_h(x_q,p_r,\mu_s ,t) \,
- 
\frac{\Delta t^n }{ (1-\alpha) }
|Q(f_h)| (x_q,p_r,\mu_s ,t) 
\nonumber\\
\Delta t^n
&\leq &
\frac{ (1 - \alpha) f_h(x_q,p_r,\mu_s ,t)  }{ |Q(f_h)| (x_q,p_r,\mu_s ,t)  }
\nonumber
\end{eqnarray}

Our CFL condition in this case would be then

\begin{eqnarray}
\Delta t^n
&\leq &
(1 - \alpha)
\min_{Q(f_h)(x_q,p_r,\mu_s,t^n) < 0}
\left\lbrace
\frac{  f_h(x_q,p_r,\mu_s ,t^n)  }{ |Q(f_h)| (x_q,p_r,\mu_s ,t^n)  }
\right\rbrace
\end{eqnarray}

The minimum for the CFL condition is taken 
over the subset of Gaussian Quadrature points
$ (x_q,p_r,\mu_s) $
inside the cell $\Omega_{ikm}$ 
(whichever the chosen quadrature rule was)
over which 
$$ Q(f_h)(x_q,p_r,\mu_s, t^n) < 0 .$$ 
This subset of points might be different for each
time $t^n$ then. \\

We have figured out the respective CFL conditions 
for the transport and collision parts. 
Finally, we only need to choose
the optimal parameter $\alpha$ that gives us the most 
relaxed CFL condition for $ \Delta t^n$ such that positivity is preserved for the cell average at the next time, $\bar{f}_{ikm}^{n+1} $. 
The positivity of the whole numerical solution 
to the pdf, not just its cell average, can be guaranteed by applying the limiters in \cite{ZhangShu1}, \cite{ZhangShu2}, which preserve the cell average but modify the slope of the piecewise linear solutions in order to make the function non - negative in case it was negative before.

\section{Stability of the scheme under an entropy norm}

We can prove the stability of the scheme under the entropy norm 
related to the interior product
\begin{equation}
 \int   f_h \, g_h e^H \, p^2 dp d\mu dx \, ,
\end{equation}
inspired in the strategy of Cheng, Gamba, Proft \cite{CGP}.
This estimates are possible due to the dissipative property of the linear collisional operator applied to the curvilinear representation of the momentum, with the entropy norm related to the 
function $e^{H(x,p,t)} = \exp\left(\varepsilon(p) -qV(x,t) \right)$
Assuming periodic boundary conditions in all directions for simplicity of the stability proof, we look for $f_h \, \in \, V_h^k$ such that $\forall \, g_h \, \in V_h^k$ 
and $\, \forall \, \Omega_{ikm} $
\begin{eqnarray} \label{entropyPPDGform}
 \int_{ikm}  \partial_t  f_h \, g_h e^H \, p^2 dp d\mu dx 
&&
\\
-\int_{ikm}  \partial_p \varepsilon (p) \, f_h \, \mu \, \partial_x (g_h e^H) \, p^2 dp d\mu dx \,
&\pm &
 \int_{km} \partial_p \varepsilon \, \widehat{ f_h  \mu }|_{x_{i\pm}}  \, g_h e^H|_{x_{i\pm}}^{\mp}  \, p^2 dp d\mu  
\nonumber\\
- \int_{ikm}
{ p^2 } (-qE)(x,t) f_h \mu \, \partial_p (g_h e^H) \, d\mu dx
& \pm & 
\int_{im} p^2_{k^\pm} \,
(-q  \widehat{ E f_h \mu } )|_{p_{k\pm}} g_h e^H|_{p_{k\pm}}^{\mp} \, d\mu dx
\nonumber\\
- \int_{ikm}
{ (1-\mu^2) f_h }  (-qE)(x,t) \, \partial_{\mu} (g_h e^H) \, p \, dp d\mu dx 
&\pm &
\int_{ik} (1-\mu_{m\pm}^2)
(-q \widehat{ E f_h })|_{\mu_{m\pm}}   \, g_h e^H|_{\mu_{m\pm}}^{\mp} \, p \, dp dx
\nonumber\\
&=&
\int_{ikm} Q(f_h) g_h e^H \, p^2 dp d\mu dx   \, ,
\nonumber
\end{eqnarray}
where we are including  as a factor the inverse of a 
Maxwellian along the characteristic flow generated 
by the Hamiltonian transport field 
$\left(\partial_p \varepsilon(p), q\partial_x V(x,t) \right) $
\begin{equation}
e^{H(x,p,t)} = \exp(\varepsilon(p) -qV(x,t)) = 
\left( e^{qV(x,t)} e^{-\varepsilon(p)} \right)^{-1} \, ,
\end{equation}
which is an exponential of the Hamiltonian energy, 
assuming the energy is measured in $K_B T $ units.

We include this modified inverse Maxwellian factor because 
we can use some entropy inequalities related to the collision operator. 
Our collision operator satisfies the dissipative property
\begin{equation}
\int_{\Omega_{\vec{p}}} Q(f) g d\vec{p} = 
-\frac{1}{2} \int_{\Omega_{\vec{p}}} S(\vec{p}\,'\rightarrow \vec{p})
e^{-\varepsilon(p')} \left(\frac{f'}{e^{-\varepsilon(p')}} -
\frac{f}{e^{-\varepsilon(p)}} \right) (g' - g) d\vec{p}\,' d\vec{p}
\end{equation}
which can be also expressed as (multiplying and dividing by 
$e^{-qV(x,t)}$) 
\begin{equation}
\int_{\Omega_{\vec{p}}} Q(f) g d\vec{p} = 
-\frac{1}{2} \int_{\Omega_{\vec{p}}} S(\vec{p}\,'\rightarrow \vec{p})
e^{-H'} \left(\frac{f'}{e^{-H'}} -
\frac{f}{e^{-H}} \right) (g' - g) d\vec{p}\,' d\vec{p}
\end{equation}

Therefore, if we choose a monotone increasing function 
$g({f}/{e^{-H}})$, namely $g = f/e^{-H} = f e^H $, we have
an equivalent dissipative property but now with
the exponential of the full Hamiltonian
\begin{equation}
\int_{\Omega_{\vec{p}}} Q(f) \frac{f}{e^{-H}} d\vec{p} = 
-\frac{1}{2} \int_{\Omega_{\vec{p}}} S(\vec{p}\,'\rightarrow \vec{p})
e^{-H'} \left(\frac{f'}{e^{-H'}} -
\frac{f}{e^{-H}} \right)^2 d\vec{p}\,' d\vec{p} \leq 0 
\end{equation}
So we have found the dissipative entropy inequality
\begin{equation}
\int_{\Omega_{\vec{p}}} Q(f) fe^H p^2 dp d\mu d\varphi = 
\int_{\Omega_{\vec{p}}} Q(f) \frac{f}{e^{-H}} d\vec{p} 
\leq 0  \, .
\end{equation}

As a consequence of this entropy inequality we obtain the following stability theorem of the scheme under an entropy norm.

\begin{theorem}
\emph{(Stability under the entropy norm $ \int f_h \, g_h e^H \, p^2 dp d\mu dx$):}
Consider the semi-discrete solution $f_h$ 
to the DG formulation in (\ref{entropyPPDGform})
for the BP system in momentum curvilinear coordinates. 
We have then that
\begin{equation}
0 \geq \int_{\Omega}f_h \partial_t  f_h \,  e^{H(x,p,t)} \, p^2 \, dp d\mu dx 
= \frac{1}{2}
\int_{\Omega}  \partial_t  f_h^2 e^{H(x,p,t)} \, p^2 \, dp d\mu dx  \, .
\end{equation}
\end{theorem}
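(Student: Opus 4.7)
The plan is to test the weighted DG weak form (\ref{entropyPPDGform}) with the particular choice $g_h = f_h$, sum the resulting identity over all cells $\Omega_{ikm}$ under the assumed periodic boundary conditions, and read off the sign of the time-derivative term. The equality $\int_\Omega f_h \partial_t f_h \, e^H p^2\,dp\,d\mu\,dx = \tfrac12 \int_\Omega \partial_t f_h^2 \, e^H p^2\,dp\,d\mu\,dx$ stated in the theorem is just the pointwise chain-rule identity $2 f_h \partial_t f_h = \partial_t(f_h^2)$, so the substantive content is the inequality.

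For the collision side, the summed right-hand side equals $\int_\Omega Q(f_h) f_h e^H p^2\,dp\,d\mu\,dx$, and the entropy dissipation estimate derived just above the theorem,
\[
\int_{\Omega_{\vec p}} Q(f) f e^H p^2\,dp\,d\mu\,d\varphi \le 0,
\]
applied pointwise in $x$ and then integrated in $x$ (with the $\varphi$ integration producing only a factor $2\pi$ by azimuthal symmetry), immediately gives non-positivity.

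For the transport side, I would use both the Hamiltonian structure carried by the advection coefficients $H^{(x)} = \mu\,\partial_p\varepsilon$, $H^{(p)} = -qE\mu$, $H^{(\mu)} = -qE$ (which encode the Poisson-bracket action of $H = \varepsilon(p) - qV(x,t)$ in the spherical momentum representation, so that $e^{-H}$ is stationary under the continuous Liouville operator) and the numerical dissipation built into the upwind flux. In each cell and each direction $\xi\in\{x,p,\mu\}$, apply the product rule $\partial_\xi(f_h e^H) = (\partial_\xi f_h) e^H + f_h e^H \partial_\xi H$ in the volume term, rewrite $f_h \partial_\xi f_h = \tfrac12 \partial_\xi(f_h^2)$, and integrate the $\partial_\xi(f_h^2)$ piece by parts in $\xi$. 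The resulting per-cell interior volume contributions, involving $H^{(\xi)}\partial_\xi H$ weighted by $f_h^2 e^H$ and the appropriate Jacobian, cancel exactly when summed over the three directions by $\{H,H\}=0$; the per-cell IBP boundary contributions combine with the upwind flux terms of (\ref{entropyPPDGform}) at each shared face and (using that $e^H$ is single-valued across faces) produce the standard non-negative upwind jump $\tfrac12 |H^{(\xi)}|\,[f_h]^2 e^H$, weighted by the appropriate Jacobian factor ($p^2$, $p^2_{k\pm}$ or $(1-\mu_{m\pm}^2)p$). Periodic boundary conditions kill the outer-domain contributions, so the total transport sum $T$ is $\ge 0$.

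Putting the two halves together, the summed identity takes the form $\int_\Omega f_h \partial_t f_h \, e^H p^2\,dp\,d\mu\,dx + T = R$ with $T\ge 0$ and $R\le 0$, yielding the claimed inequality. The main technical obstacle is the curvilinear bookkeeping: because the volume elements carry direction-dependent Jacobian weights, the cancellation of the interior $H^{(\xi)}\partial_\xi H$ contributions across directions has to be checked carefully; equivalently, one verifies that the specific divergence form of the transport operator in $(x,p,\mu)$ spherical coordinates does admit $e^{-H}$ as a stationary solution of the continuous (pre-upwinding) transport part, after which the upwind-jump identification at interior faces is routine.
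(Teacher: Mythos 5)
Your proposal is correct and follows essentially the same route as the paper: take $g_h=f_h$, sum over cells with periodic boundary conditions, bound the collision side by the entropy inequality $\int Q(f_h)f_he^H\,p^2\,dp\,d\mu\,dx\le 0$, and show the transport side reduces to a non-negative sum of upwind jump terms after a second integration by parts, using that the advection field is divergence-free in $(x,p,\mu)$ (with the Jacobian weights) and annihilates $H$. The paper merely packages your direction-by-direction bookkeeping into the single vector field $\beta=\left(p^2\mu\,\partial_p\varepsilon,\,-qE\,p^2\mu,\,-qE\,p(1-\mu^2)\right)$ with $\partial\cdot\beta=0$ and $\beta\cdot\partial H=0$, arriving at the jump term $\tfrac14\int_{e_h}(f_h^+-f_h^-)^2\,|\beta\cdot\hat n|\,e^H\,d\sigma\ge 0$.
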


\begin{proof}

Choosing $g_h = f_h$ in (\ref{entropyPPDGform}),
and considering the union of all the cells $\Omega_{ikm}$,
which gives us the 
 whole domain  $\Omega = \Omega_x \times \Omega_{p,\mu}$ for integration, we have
\begin{eqnarray} 
0 \geq 
\int_{\Omega} Q(f_h) f_h e^H \, p^2 dp d\mu dx
&=&
  \int_{\Omega} \partial_t  f_h \, f_h e^H \, p^2 dp d\mu dx 
\nonumber\\
-\int_{\Omega}  \partial_p \varepsilon (p) \, f_h \, \mu \, \partial_x (f_h e^H) \, p^2 dp d\mu dx \,
&+ &
 \int_{\partial_x\Omega} \partial_p \varepsilon \, \widehat{ f_h  \mu }  \, f_h e^H  \, p^2 dp d\mu  
\nonumber\\
- \int_{\Omega}
{ p^2 } (-qE) f_h \mu \, \partial_p (f_h e^H) \, d\mu dx
&+ & 
\int_{\partial_p \Omega} p^2  \,
(-q  \widehat{ E f_h \mu } )  f_h e^H  \, d\mu dx
\nonumber\\
- \int_{\Omega}
{ (1-\mu^2) f_h }  (-qE) \, \partial_{\mu} (f_h e^H) \, p \, dp d\mu dx 
&+ &
\int_{\partial_{\mu} \Omega} (1-\mu^2)
(-q \widehat{ E f_h }) \, f_h e^H \, p \, dp dx
\nonumber
\end{eqnarray}

We can express this in the more compact form
\begin{equation} 
0 \geq \int_{\Omega} \partial_t  f_h \, f_h e^H \, p^2 \, dp d\mu dx 
-\int_{\Omega} f_h \beta \cdot \partial (f_h e^H) \,dp d\mu dx\,
+
\int_{\partial \Omega}
 \widehat{ f_h }  \beta \cdot \hat{n} \, f_h e^H \, d\sigma
\end{equation}
defining the transport vector $\beta$ with the properties
\begin{eqnarray}
\beta &=& \left( p^2 \mu \partial_p \varepsilon(p), -qE \, p^2 \mu, 
-qE p (1-\mu^2) \right) \, , \\
\partial \beta &=&  \partial_{(x,p,\mu)} \beta = 
\left(0, -2pqE\mu, 2\mu qE\right), \quad 
\partial \cdot \beta = -2pqE\mu + 2pqE\mu = 0, \nonumber\\
 \beta \cdot \partial H &=&  \left( p^2 \mu \partial_p \varepsilon(p), -qE \, p^2 \mu, 
-qE p (1-\mu^2) \right) \cdot (qE, \partial_p \varepsilon, 0 ) = 0, \quad \partial_{\mu} H = 0 \, . \nonumber
\end{eqnarray}

We integrate by parts again the transport integrals, obtaining that
\begin{eqnarray}
\int_{\Omega} f_h \beta \cdot \partial (f_h e^H) \,dp d\mu dx\, 
&=&
- \int_{\Omega} \partial \cdot ( f_h \beta) f_h e^H \,dp d\mu dx\, 
+ \int_{\partial \Omega} f_h \beta \cdot \hat{n} f_h e^H \,d\sigma\,
\nonumber\\
&=&
- \int_{\Omega} (\beta \cdot \partial  f_h) f_h e^H \,dp d\mu dx\, 
+ \int_{\partial \Omega} f_h \beta \cdot \hat{n} f_h e^H \,d\sigma\,
\nonumber
\end{eqnarray}
but since
\begin{equation}
\beta \cdot \partial (f_h e^H) = \beta \cdot e^H \partial f_h 
+ \beta \cdot f_h e^H \partial H = e^H \beta \cdot \partial f_h
\end{equation}
 we have then
\begin{equation}
\int_{\Omega} f_h \beta \cdot \partial (f_h e^H) \,dp d\mu dx\, 
=
\int_{\Omega} (\beta \cdot \partial  f_h) f_h e^H \,dp d\mu dx\, 
=
\frac{1}{2} 
\int_{\partial \Omega} f_h \beta \cdot \hat{n} f_h e^H \,d\sigma\, \, .
\end{equation}
We can express our entropy inequality then as
\begin{equation} 
0 \geq \int_{\Omega} \partial_t  f_h \, f_h e^H \, p^2 \, dp d\mu dx 
-\frac{1}{2} 
\int_{\partial \Omega} f_h \beta \cdot \hat{n} f_h e^H \,d\sigma\,
+
\int_{\partial \Omega}
 \widehat{ f_h }  \beta \cdot \hat{n} \, f_h e^H \, d\sigma \, ,
\end{equation}
remembering that we are integrating over the whole domain by considering the union of all the cells defining our mesh.
We distinguish between the boundaries of cells for which $\beta \cdot \hat{n} \geq 0$ and the ones for which $\beta \cdot \hat{n} \leq 0$, defining uniquely the boundaries. Remembering that the upwind flux rule is such that $\hat{f}_h = f_h^-$, we have that the value of the solution inside the cells close to boundaries for which $\beta\cdot\hat{n} \geq 0$ is $f_h^-$, and
for boundaries $\beta \cdot \hat{n} \leq 0 $ the value of the solution inside the cell close to that boundary is $f_h^+$. We have then that
\begin{eqnarray} 
0 &\geq& 
\int_{\Omega} \partial_t  f_h \, f_h e^H \, p^2 \, dp d\mu dx 
-\frac{1}{2} 
\int_{\partial \Omega} f_h \beta \cdot \hat{n} f_h e^H \,d\sigma\,
+
\int_{\partial \Omega}
 f_h^-   \beta \cdot \hat{n} \, f_h e^H \, d\sigma
\nonumber\\
0 &\geq& 
\int_{\Omega} \partial_t  f_h \, f_h e^H \, p^2 \, dp d\mu dx 
-\frac{1}{2} 
\int_{\beta\cdot\hat{n}\geq 0} f_h^- |\beta \cdot \hat{n}| f_h^- e^H \,d\sigma\,
+
\int_{\beta\cdot\hat{n}\geq 0}
 f_h^-  |\beta \cdot \hat{n}| \, f_h^- e^H \, d\sigma
 \nonumber\\
&+& 
\frac{1}{2} 
\int_{\beta\cdot\hat{n}\leq 0} f_h^+ |\beta \cdot \hat{n}| f_h^+ e^H \,d\sigma\,
-
\int_{\beta\cdot\hat{n}\leq 0}
 f_h^-  |\beta \cdot \hat{n}| \, f_h^+ e^H \, d\sigma \, ,
 \nonumber
\end{eqnarray}
and using a notation $e_h$ for the boundaries that allows
redundancy, balanced then by a factor of 1/2, we have 
\begin{eqnarray} 
0 &\geq& 
\int_{\Omega} \partial_t  f_h \, f_h e^H \, p^2 \, dp d\mu dx 
-\frac{1}{2} \left(
\frac{1}{2}
\int_{e_h} f_h^- |\beta \cdot \hat{n}| f_h^- e^H \,d\sigma\,
+
\int_{e_h}
 f_h^-  |\beta \cdot \hat{n}| \, f_h^- e^H \, d\sigma
\right. \nonumber\\
&+& \left.
\frac{1}{2} 
\int_{e_h} f_h^+ |\beta \cdot \hat{n}| f_h^+ e^H \,d\sigma\,
-
\int_{e_h}
 f_h^-  |\beta \cdot \hat{n}| \, f_h^+ e^H \, d\sigma
 \right)
 \nonumber\\
 0 &\geq& 
\int_{\Omega} \partial_t  f_h \, f_h e^H \, p^2 \, dp d\mu dx 
+\frac{1}{2} \left( \frac{1}{2}
\int_{e_h} f_h^- |\beta \cdot \hat{n}| f_h^- e^H \,d\sigma\,
\right. 
 \nonumber\\
&+& \left.
\frac{1}{2} 
\int_{e_h} f_h^+ |\beta \cdot \hat{n}| f_h^+ e^H \,d\sigma\,
-
\int_{e_h}
 f_h^-  |\beta \cdot \hat{n}| \, f_h^+ e^H \, d\sigma
\right)
 \nonumber\\
 0 &\geq& 
\int_{\Omega} \partial_t  f_h \, f_h e^H \, p^2 \, dp d\mu dx 
 \nonumber\\
&+& 
\frac{1}{4} \left(
\int_{e_h} f_h^- f_h^- |\beta \cdot \hat{n}|  e^H \,d\sigma\,
-2
\int_{e_h}
 f_h^- f_h^+  |\beta \cdot \hat{n}| \,  e^H \, d\sigma
+
\int_{e_h} f_h^+ f_h^+ |\beta \cdot \hat{n}|  e^H \,d\sigma\,
\right)
 \nonumber\\
 0 &\geq& 
\int_{\Omega} \partial_t  f_h \, f_h e^H \, p^2 \, dp d\mu dx 
+
\frac{1}{4} 
\int_{e_h} (f_h^+ - f_h^-)^2 |\beta \cdot \hat{n}|  e^H \,d\sigma\, .
\end{eqnarray}

Since the second term is non-negative, we conclude therefore that
\begin{equation}
0 \geq \int_{\Omega}f_h \partial_t  f_h \,  e^{H(x,p,t)} \, p^2 \, dp d\mu dx 
= \frac{1}{2}
\int_{\Omega}  \partial_t  f_h^2 e^{H(x,p,t)} \, p^2 \, dp d\mu dx  \, ,
\end{equation}
and in this sense is that the numerical solution has stability
with respect to the considered entropy norm.
\end{proof}

As a remark, we obtain the corollary

\begin{corollary}
\emph{(Stability under the entropy norm for a time independent Hamiltonian):}
If $V=V(x)$, so $\partial_t H = 0$,
the stability under our entropy norm gives us that for $t\geq 0$
 \begin{equation}
\left| \left| f_h \right| \right|_{L^2_{e^H p^2}}^2 (t)
=
\int_{\Omega}  f_h^2(x,p,\mu,t) e^{H(x,p)} \, p^2 \, dp d\mu dx 
\leq 
\left| \left| f_h \right| \right|_{L^2_{e^H p^2}}^2 (0) \quad .
\end{equation}
\end{corollary}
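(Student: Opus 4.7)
The plan is to follow the Cheng--Gamba--Proft strategy: test the DG weak form (\ref{entropyPPDGform}) against $g_h = f_h$, sum the cell identities over all $\Omega_{ikm}$ so the volume contributions telescope into integrals over $\Omega$ while the cell-boundary contributions collapse into integrals over the internal skeleton $e_h$, and then exploit the collision-entropy inequality $\int_\Omega Q(f_h) f_h e^H \, p^2\, dp\, d\mu\, dx \le 0$ derived in the paragraph preceding the theorem.

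First I would package the three transport terms into a single divergence-form expression by introducing the transport vector $\beta = \big(p^2\mu\,\partial_p\varepsilon,\; -qE\, p^2\mu,\; -qE\, p(1-\mu^2)\big)$ in the coordinates $(x,p,\mu)$, so that the volume part of the weak form reads $-\int f_h\, \beta\cdot \partial(f_h e^H)\, dp\, d\mu\, dx$ and the boundary part reads $\int_{\partial\Omega_{\text{cell}}} \widehat{f_h}\,\beta\cdot\hat n\, f_h e^H\, d\sigma$. The two structural facts I would record up front are the divergence-free property $\partial\cdot \beta = -2pqE\mu + 2pqE\mu = 0$ and the orthogonality $\beta\cdot \partial H = 0$, which hold because the electric-field part is $\mu$-perpendicular and the energy-gradient part is $x$-independent; these are exactly what makes the Hamiltonian an invariant of the transport field.

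Next I would integrate the volume transport term by parts once more. Using $\beta\cdot\partial(f_h e^H) = e^H\beta\cdot\partial f_h$ (by $\beta\cdot\partial H=0$) and then $\partial\cdot(f_h\beta) = \beta\cdot\partial f_h$ (by $\partial\cdot\beta=0$), the classical symmetrization $\int f_h(\beta\cdot\partial f_h) e^H = \tfrac12 \int_{\partial\Omega_{\text{cell}}} f_h^2\, \beta\cdot\hat n\, e^H\, d\sigma$ holds cellwise. Summing over all cells and using the upwind rule $\widehat{f_h}=f_h^-$ on each interior face, the entropy identity becomes
\begin{equation*}
0 \ge \int_\Omega \partial_t f_h\, f_h\, e^H\, p^2\, dp\, d\mu\, dx
\;+\; \tfrac14 \int_{e_h} (f_h^+ - f_h^-)^2\, |\beta\cdot\hat n|\, e^H\, d\sigma,
\end{equation*}
where, on each internal edge, the contributions from the two adjacent cells combine: the $-\tfrac12$ from the volume symmetrization produces $-\tfrac12(|{f_h^-}|^2 + |{f_h^+}|^2)|\beta\cdot\hat n|$, while the upwind boundary integrals produce $+|{f_h^-}|^2|\beta\cdot\hat n| - f_h^- f_h^+ |\beta\cdot\hat n|$ on the appropriate side. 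Assembling these and completing the square yields the non-negative jump term, as indicated in the statement.

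The main obstacle is the bookkeeping of boundary contributions: one must carefully orient each face, split the skeleton into $\{\beta\cdot\hat n\ge 0\}$ and $\{\beta\cdot\hat n\le 0\}$ portions, identify which trace of $f_h$ is the upwind value there, and then rewrite everything in the ``redundant'' edge-based notation (each interior face counted once from each side with a compensating factor of $1/2$) so that the jump-square structure becomes visible. Once this is in place, periodicity removes any genuine boundary contribution from $\partial\Omega$, the jump integral over $e_h$ is manifestly non-negative, and discarding it together with the non-positive collision contribution gives $0 \ge \int_\Omega \partial_t f_h\, f_h\, e^H\, p^2\, dp\, d\mu\, dx = \tfrac12 \int_\Omega \partial_t f_h^2\, e^H\, p^2\, dp\, d\mu\, dx$, which is the claimed stability estimate.
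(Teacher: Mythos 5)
Your proposal is a correct reconstruction of the paper's proof of the \emph{Theorem} (the differential inequality $0 \geq \int_{\Omega}\partial_t f_h\, f_h e^H p^2\,dp\,d\mu\,dx$): the $\beta$-repackaging, the two structural identities $\partial\cdot\beta=0$ and $\beta\cdot\partial H=0$, the second integration by parts, and the edge-based completion of the square are exactly the steps the paper takes there. But the statement you were asked to prove is the \emph{Corollary}, and your argument stops precisely where the corollary's content begins. You end with ``$0\geq \tfrac12\int_\Omega \partial_t f_h^2\, e^H p^2$, which is the claimed stability estimate'' --- it is not. The claimed estimate is the integrated, in-time monotonicity statement $\|f_h\|^2_{L^2_{e^Hp^2}}(t)\leq \|f_h\|^2_{L^2_{e^Hp^2}}(0)$, and nowhere do you invoke the hypothesis $V=V(x)$, i.e.\ $\partial_t H=0$, which is what the corollary is actually about.

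The missing step is short but essential. In general $\partial_t\bigl(f_h^2 e^H\bigr)=\bigl(\partial_t f_h^2\bigr)e^H+f_h^2 e^H\,\partial_t H$, so the theorem's inequality controls $\int_\Omega (\partial_t f_h^2)\,e^H p^2$ but \emph{not} $\frac{d}{dt}\int_\Omega f_h^2 e^H p^2$ unless the second term vanishes. The hypothesis $\partial_t H=-q\,\partial_t V=0$ is exactly what kills that term, allowing the time derivative to be pulled outside the integral:
\begin{equation*}
0\;\geq\;\int_\Omega \partial_t\bigl(f_h^2 e^{H(x,p)}\bigr)\,p^2\,dp\,d\mu\,dx
\;=\;\frac{d}{dt}\int_\Omega f_h^2\, e^{H(x,p)}\,p^2\,dp\,d\mu\,dx\, ,
\end{equation*}
after which integrating from $0$ to $t$ (or simply noting the norm is nonincreasing) gives the corollary. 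This is the paper's entire proof of the corollary; you need to add it, and in particular to flag where the time-independence of $H$ is used, since for a time-dependent potential the sign of $\int_\Omega f_h^2 e^H \partial_t H\, p^2$ is uncontrolled and the conclusion would fail.
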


\begin{proof}
The corollary follows from the fact that, since 
$\partial_t H = -q \partial_t V = 0$, we have
 \begin{equation}
0 \geq 
\int_{\Omega}  \partial_t \left( f_h^2 e^{H(x,p)} \right) \, p^2 \, dp d\mu dx 
= \frac{d}{dt}
\int_{\Omega}  f_h^2(x,p,\mu,t) e^{H(x,p)}  \, p^2 \, dp d\mu dx  \, .
\end{equation}
Since the entropy norm is a decreasing function of time,
our result follows immediately.
\end{proof}

\section{Conclusions}

The work presented here relates to the
development of positivity preserving DG schemes 
for BP semiconductor models. Due to the physics of 
energy transitions given by Planck's law, and to reduce the dimension of the associated collision operator, given its mathematical form,
we pose the Boltzmann Equation for electron transport in
curvilinear coordinates for the momentum. This is a more general form that includes the two other BP models used in the previous lines of research as particular cases. 
We consider the 1D diode problem with azimuthal symmetry assumptions, which give us a 3D plus time problem. 
We choose for this problem the spherical coordinate system $\vec{p}(p,\mu,\varphi)$, slightly different to the previous choices, because its DG formulation gives simpler integrals involving just piecewise polynomial functions for both transport and collision terms. Using the strategy in \cite{ZhangShu1}, \cite{ZhangShu2}, \cite{CGP} we treat the collision operator as a source term, and find convex combinations of the transport and collision terms which guarantee the propagation of positivity of the cell average of our numerical probability density function at the next time step. The positivity of the numerical solution to the pdf in the whole domain can be guaranteed just by applying the limiters in \cite{ZhangShu1}, \cite{ZhangShu2} that preserve the cell average but modify the slope of the piecewise linear solutions in order to make the function non - negative. We have been able to prove as well the stability of the 
semi-discrete DG scheme formulated under an entropy norm, assuming periodic boundary conditions for simplicity. For the simpler case of a time dependent Hamiltonian, the decay of the entropy norm of the numerical solution over time follows as a corollary. This highlights the importance of the dissipative properties of our collisional operator given by its entropy inequalities. In this case, the entropy norm depends on the full time dependent Hamiltonian rather than just the Maxwellian associated solely to the kinetic energy.

\end{document}